\newtheorem{prop}{Proposition}[section]
\newtheorem{coroll}[prop]{Corollary}
\newtheorem{thm}[prop]{Theorem}
\newtheorem{exam}[prop]{Example}
\newtheorem{rem}[prop]{Remark}
\newtheorem{rems}[prop]{Remarks}
\newtheorem{problem}{Problem}
\numberwithin{equation}{section}
\newcommand{\NN}{{\mathbb N}}
\newcommand{\CC}{{\mathbb C}}
\newcommand{\mnc}{{M_n(\CC)}}
\newcommand{\glimma}{{\hbox{\it Glimm}\kern.05em(A)}}
\newcommand{\glimmb}{{\hbox{\it Glimm}\kern.05em(B)}}
\newcommand{\prima}{{\hbox{\it Prim}\kern.05em(A)}}
\newcommand{\primb}{{\hbox{\it Prim}\kern.05em(B)}}
\newcommand{\irra}{{\hbox{\it Irr}\kern.05em(A)}}
\newcommand{\irrb}{{\hbox{\it Irr}\kern.05em(B)}}
\newcommand{\speca}{{\hbox{\it Spec}\kern.05em(A)}}
\newcommand{\specb}{{\hbox{\it Spec}\kern.05em(B)}}
\newcommand{\specza}{{\hbox{\it Spec}\kern.05em(Z(A))}}
\newcommand{\rest}[2]{{{#1}_{\kern-.5pt|{#2}}}}  
\newcommand{\pfi}{\varphi}
\newcommand{\sep}[1]{{\mathcal{S}({#1})}}
\newcommand{\rad}[1]{{\textup{rad}({#1})}}
\def\C*{{\sl C*}-algebra}
\def\Cs*{{\sl C*}-subalgebra}
\def\AF/{{\sl AF}-algebra}
\def\AFs/{{\sl AF}-algebras}
\def\vN/{{von Neumann algebra}}
\def\II1f/{{\textup{II}}$_1$\kern2pt{factor}}
\def\spbddop/{{spectrally bounded operator}}
\def\spbdd/{{spectrally bounded}}
\def\spisom/{{spectral isometry}}
\def\spisoms/{{spectral isometries}}
\def\SR/{{\sl SR}-algebra}
\def\Tone/{{\sl T}$_1$}
\def\Ttwo/{{\sl T}$_2$}
\begin{document}

\setcounter{page}{1}

\markboth{Martin Mathieu}{A Collection of Problems on Spectrally Bounded Operators}

\catchline{?}{?}{2009}{}{}

\title{A COLLECTION OF PROBLEMS ON\\ SPECTRALLY BOUNDED OPERATORS%
            \footnote{This paper is an expanded version of a talk given at the conference \textsl{Jordan Structures: Nonassociative Analysis and Geometry\/}
                                  on 6 September 2008 at Queen Mary College, London.}}
\author{MARTIN MATHIEU}
\address{Department of Pure Mathematics, Queen's University Belfast,\\ Belfast BT7 1NN, Northern Ireland\\
                   m.m@qub.ac.uk}

\maketitle
\begin{history}
\end{history}

\begin{dedication}
\textit{Dedicated to Professor Rainer Nagel on the occasion of his retirement.}
\end{dedication}

\begin{abstract}
We discuss several open problems on spectrally bounded operators, some new, some old, adding in a few new insights.
\end{abstract}

\keywords{Banach algebras, \C*s, derivations, Jordan homomorphisms, Lie homomorphisms, spectrally bounded operators, spectral isometries,
                       Glimm ideals, Hausdorff spectrum.}

\ccode{AMS Subject Classification: Primary 47A65; Secondary 17C65, 46H10, 46L05, 47A10, 47B48.}

\section{Introduction}\label{sec:intro}

\noindent
Let $A$ be a unital complex Banach algebra. A linear mapping $T\colon E\to B$ from a subspace $E\subseteq A$ into another unital complex
Banach algebra $B$ is called \textit{spectrally bounded\/} if there is a constant $M\geq0$ such that $r(Tx)\leq M\mkern.6mu r(x)$ for all $x\in E$. Here,
and in what follows, $r(x)$ stands for the spectral radius of a Banach algebra element~$x$.

This concept evolved in Banach algebra theory, and especially automatic continuity, over time in the 1970's and 1980's but the terminology was only
introduced in~\cite{MM1994a}, together with its companions \textit{spectrally infinitesimal}: $M=0$; \textit{spectrally contractive}: $M=1$; and
\textit{spectrally isometric}: $r(Tx)=r(x)$ for all~$x$. It follows from~\cite{Aup82}, see also \cite{AupMM}, Lemma~A, that the separating
space of every surjective \spbddop/ $T$ on a closed subspace~$E$ is contained in the radical of~$B$; thus $T$ is bounded if $B$ is semisimple.
This was used by Aupetit in~\cite{Aup82}
to give a new proof of Johnson's uniqueness-of-the-complete-norm-topology theorem; cf.\ also~\cite{Ran89}.
It was further exploited in~\cite{AupMM} to investigate continuity properties of Lie epimorphisms.

A systematic study of \spbddop/s was begun in~\cite{Schick}, with its main results published in~\cite{MMS1} and~\cite{MMS2}.
Since then, the interest in the topic has steadily grown and by now there is a sizeable literature on a variety of aspects.
The present paper's aim is to discuss several of the open problems on \spbddop/s along with some new results that
should make it clear why these questions are natural and important.

Section~\ref{sec:auto-cont} is devoted to a recapitulation of probably the most important and deepest problem on
\spbddop/s, the non-commutative Singer--Wermer conjecture. No substantial progress seems to have been made on this over
the last few years. A standard method to reduce a more sophisticated problem in Banach algebra theory to a simpler one
is to use quotients. When dealing with operators between Banach algebras, we, of course, need invariant ideals to perform this.
In Section~\ref{sec:quotients} we discuss the interplay between properties of a (\spbdd/) operator and the operator it induces
on suitable quotient Banach algebras in a fairly general setting and recover recent results for operators preserving the
essential spectral radius on $B(H)$ in~\cite{BBS08}.

The identity element plays a distinguished role for \spbddop/s. Suppose that $T\colon A\to B$ is a surjective \spisom/
between unital \C*s $A$ and~$B$. Since $T$ restricted to the centre $Z(A)$ of $A$ is a *-isomorphism onto the centre
$Z(B)$ of $B$ (\cite{MM05}, Proposition~2.3), $T1$ is a central unitary in~$B$. Therefore, whenever we study a \spisom/ $T$,
we can without loss of generality assume that $T$ is unital (i.e., $T1=1$); see the proof of \cite{LMM}, Corollary~2.6.
For a general \spbddop/, the situation changes and for this reason we shall pay attention in Section~\ref{sec:value-at-one}
to the relevance of the value~$T1$. As it turns out, if the domain algebra is sufficiently `infinite', then $T1$ must be a central invertible
element in~$B$. As a consequence, we are able to extend the main results in~\cite{MMS2} and~\cite{LMM} to the non-unital setting
(Theorem~\ref{thm:nonunital-spbdd} below). The special case $A=B(H)$, $B=B(K)$ for Hilbert spaces $H$ and $K$ was studied
previously in~\cite{CuiHou}.

In~\cite{MM05}, we propose a non-selfadjoint version of Kadison's classical theorem stating that every unital surjective isometry
between unital \C*s is a Jordan *-isomorphism. In many cases, this conjecture (see Problem~\ref{prob:spiom-conjecture} below)
has been confirmed, however almost always under some strong assumption of `infiniteness' on the domain \C*.
In Section~\ref{sec:spec-isoms}, we employ a reduction method developed in~\cite{MMSou} and~\cite{MMRud} to obtain a new result
of a similar ilk (Theorem~\ref{thm:no-tracial-states}). More importantly, perhaps, we propose a new route based on a description
of \spbddop/s in the presence of a trace to solve this open question at least in the setting of \II1f/s (Corollary~\ref{cor:type-twoone}).

\section{Automatic Continuity}\label{sec:auto-cont}

\noindent
The \textit{separating space\/} $\sep T$ of an operator $T$ between normed spaces $E$ and $F$ is defined by
\[
\sep T=\{y\in F\mid y=\lim_nTx_n\text{ for some }(x_n)_{n\in\NN}\subseteq E,\ x_n\to0\}
\]
and measures the degree of discontinuity of~$T$. If $E$ and $F$ are Banach spaces then $T$ is bounded if and only if
$\sep T=\{0\}$, by the closed graph theorem. For a detailed discussion of the separating space see~\cite{Dal}.

It follows from \cite{Aup82}, see also \cite{Ran89} and~\cite{AupMM}, that $\sep T\cap TE$ consists of quasinilpotent
elements whenever $T\colon E\to B$ is a \spbddop/ on a subspace $E$ of a Banach algebra. In fact,
$r(Tx)\leq r(Tx-y)$ for each $x\in E$ and $y\in\sep T$. This, together with Zem\'anek's characterisation of the Jacobson
radical $\rad A$ of~$A$, entails the following result.

\begin{thm}[Aupetit]\label{thm:aupetit-aut-cont}
Let\/ $T\colon E\to B$ be a \spbddop/ on a closed subspace\/ $E$ of a Banach algebra onto a Banach algebra~$B$. Then\/ $\sep T\subseteq\rad B$.
In particular, if\/ $B$ is semisimple, then\/ $T$ is bounded.
\end{thm}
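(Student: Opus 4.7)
The plan is to combine the two spectral ingredients highlighted in the paragraph immediately preceding the statement: Aupetit's inequality $r(Tx) \le r(Tx - y)$, valid for every $x \in E$ and every $y \in \sep T$, and Zem\'anek's characterisation of the Jacobson radical, according to which $y \in \rad B$ if and only if $r(b + y) \le r(b)$ for every $b \in B$.

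First I would fix $y \in \sep T$ and aim to show $y \in \rad B$. Given an arbitrary $b \in B$, surjectivity of $T$ produces some $x \in E$ with $Tx = b + y$. Substituting into Aupetit's inequality yields
\[
r(b + y) \;=\; r(Tx) \;\le\; r(Tx - y) \;=\; r(b).
\]
Since $b \in B$ was arbitrary, Zem\'anek's theorem forces $y \in \rad B$, and hence $\sep T \subseteq \rad B$, which is the first assertion.

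For the ``in particular'' clause, assume that $B$ is semisimple, i.e.\ $\rad B = \{0\}$. Then $\sep T = \{0\}$. Because $E$ is a closed subspace of a Banach algebra, it is itself a Banach space, so the closed graph theorem applies and gives that $T\colon E \to B$ is bounded.

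The substantive analytical work has been done in Aupetit's subharmonicity argument, which supplies the inequality $r(Tx) \le r(Tx - y)$, and in Zem\'anek's radical theorem; what remains is essentially bookkeeping. The only point requiring care is the correct use of surjectivity, so that as $x$ varies over $E$ the element $b = Tx - y$ really does range over all of $B$ and the Zem\'anek criterion can be applied without restriction. If surjectivity of $T$ were weakened to $\overline{TE} = B$, one would need an additional continuity argument at this step, which is why the hypothesis is stated in the form above.
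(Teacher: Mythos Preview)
Your argument is correct and follows exactly the route the paper indicates: the paper does not give a formal proof of this theorem but, in the sentence immediately preceding it, names precisely the two ingredients you use---Aupetit's inequality $r(Tx)\le r(Tx-y)$ for $x\in E$, $y\in\sep T$, and Zem\'anek's characterisation of the radical---and states that they ``entail'' the result. Your write-up is the natural expansion of that sketch; the only cosmetic point is that Zem\'anek's criterion is usually stated as $r(b+y)=r(b)$ for all $b$, but your one-sided version is trivially equivalent (substitute $-b-y$ for $b$), so nothing is lost.
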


The above estimate on the spectral radii is used in \cite{AupMM} to describe the continuity of surjective Lie homomorphisms as follows.

\begin{thm}[Aupetit--Mathieu]\label{thm:aut-cont-lie}
The separating space\/ $\sep\theta$ of a Lie epimorphism\/ $\theta$ between two Banach
algebras\/ $A$ and\/ $B$ is contained in\/ $\mathcal{Z}(B)$, the centre modulo the radical.
\end{thm}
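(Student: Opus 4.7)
The statement unfolds to the claim that for every $y \in \sep\theta$ and every $b \in B$, the commutator $[y, b]$ lies in $\rad B$, since $\mathcal{Z}(B)$ is by definition the preimage of $Z(B/\rad B)$ under the canonical quotient map $B \to B/\rad B$.

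My first step is to verify that $\sep\theta$ is a closed Lie ideal of $B$. Closedness is built into the definition of the separating space. For the Lie ideal property, given $y \in \sep\theta$ and $b \in B$, I use the surjectivity of $\theta$ to write $b = \theta a$ and choose a null sequence $(x_n) \subseteq A$ with $\theta x_n \to y$; the Lie identity then yields $\theta[x_n, a] = [\theta x_n, \theta a] \to [y, b]$ while $[x_n, a] \to 0$, hence $[y, b] \in \sep\theta$.

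The core step is a spectral estimate modelled on the inequality $r(Tx) \leq r(Tx - y)$ that underlies Theorem~\ref{thm:aupetit-aut-cont}. Fix $y \in \sep\theta$, write an arbitrary $b \in B$ as $b = \theta a$, and for each $c = \theta e \in B$ consider the affine analytic families $\lambda \mapsto \theta(e + \lambda [x_n, a]) = \theta e + \lambda\, \theta[x_n, a]$, which converge pointwise to $\lambda \mapsto c + \lambda [y, b]$. Vesentini's theorem on subharmonicity of $\log r$, combined with Aupetit's passage-to-the-limit device, should yield $r(c + \mu[y, b]) \leq r(c)$ for every $\mu \in \CC$ and every $c \in B$; by the surjectivity of $\theta$, $c$ ranges over all of $B$, so Zem\'anek's characterisation of the Jacobson radical forces $[y, b] \in \rad B$, which is exactly what is required.

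The principal obstacle lies in this spectral estimate: a Lie epimorphism is not in general \spbdd/, so the inequality from Theorem~\ref{thm:aupetit-aut-cont} cannot be invoked verbatim. The remedy is that the only images one needs to control are of commutators $[x_n, a]$, which $\theta$ sends to commutators $[\theta x_n, \theta a]$ in the range; on this commutator subspace the structure is preserved, and combining this with the Lie ideal property of $\sep\theta$ established above (which permits iterated bracketing inside the separating space) one can push Aupetit's subharmonic-function argument through. Once the restricted estimate is in place, the concluding appeal to Zem\'anek is routine.
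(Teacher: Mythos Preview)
The paper does not actually prove Theorem~\ref{thm:aut-cont-lie} in the text; it is quoted from~\cite{AupMM}, and the only indication given is that ``the above estimate on the spectral radii'' (namely $r(Tx)\le r(Tx-y)$ for \spbdd/ $T$ and $y\in\sep T$) is the tool used. So there is no detailed in-paper argument to compare against beyond that hint.

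Your overall plan is consistent with that hint: establish that $\sep\theta$ is a closed Lie ideal, then show $[y,b]\in\rad B$ by a subharmonic Liouville-type argument together with Zem\'anek's characterisation. The Lie-ideal verification in your second paragraph is correct.

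There is, however, a genuine gap in your last paragraph. You rightly flag that $\theta$ is not \spbdd/, so the estimate behind Theorem~\ref{thm:aupetit-aut-cont} is unavailable, and then you write that ``on this commutator subspace the structure is preserved'' and that ``one can push Aupetit's subharmonic-function argument through''. These are assertions, not arguments. Concretely: to conclude that $\lambda\mapsto r(c+\lambda[y,b])$ is constant (and hence $[y,b]\in\rad B$ via Zem\'anek) you need a bound on $r\bigl(\theta e+\lambda\,\theta[x_n,a]\bigr)$ that does not grow with~$|\lambda|$. The trivial norm estimate $r(\,\cdot\,)\le\|c\|+|\lambda|\cdot\textup{const}$, which is all that the convergence $\theta[x_n,a]\to[y,b]$ gives for free, is useless for a Liouville conclusion, and the observation $\theta[x_n,a]=[\theta x_n,\theta a]$ by itself supplies nothing stronger. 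The substantive work in~\cite{AupMM} lies precisely in manufacturing the requisite $\lambda$-uniform spectral control in the absence of spectral boundedness of~$\theta$; your proposal does not indicate how this is done. As written, the argument stops exactly where the real difficulty begins.
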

The centre modulo the radical, $\mathcal{Z}(B)$, is the pre-image of the centre of $\hat B=B/\rad B$ under the canonical map
$B\to\hat B$ and turns out to be the largest quasi\-nil\-potent Lie ideal of~$B$. As a consequence, it is invariant under every
Lie epimorphism and thus the above result yields the following automatic continuity statement (\cite{AupMM}, Corollary).
\begin{coroll}
Let\/ $\theta\colon A\to B$ be a Lie epimorphism between the Banach algebras\/ $A$ and~$B$. The induced Lie epimorphism\/
$\hat\theta\colon\hat A= A/\rad A\to\hat B$ between the Banach Lie algebras\/ $\hat A$ and\/ $\hat B$ is continuous.
\end{coroll}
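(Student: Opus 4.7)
My plan is to deduce the corollary from Theorem~\ref{thm:aut-cont-lie} by chasing the separating space through the canonical quotient maps onto the semisimple quotients.

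First I would verify that $\hat\theta\colon\hat A\to\hat B$ is well defined, i.e.\ that $\theta(\rad A)\subseteq\rad B$. Since $\rad A$ is a closed associative ideal contained in $\mathcal{Z}(A)$ and $\mathcal{Z}$ is invariant under Lie epimorphisms (as remarked just after Theorem~\ref{thm:aut-cont-lie}), one has $\theta(\rad A)\subseteq\mathcal{Z}(B)$; the refinement to $\theta(\rad A)\subseteq\rad B$ should follow from the maximality of $\rad A$ as the largest associative ideal of quasinilpotent elements together with surjectivity of $\theta$. Once $\hat\theta$ descends to the semisimple quotients, it is plainly still a Lie epimorphism between the Banach Lie algebras $\hat A$ and $\hat B$.

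Next I would invoke the standard separating-space stability fact $\sep{S\circ T}=\overline{S(\sep{T})}$, valid for continuous $S$ and arbitrary linear $T$ (the non-trivial inclusion follows by observing that $T$ is continuous modulo $\sep{T}$, so that null sequences for $T$ can be replaced by sequences from $\sep{T}$ after lifting). Applied to the commuting square $\hat\theta\circ\pi_A=\pi_B\circ\theta$, combined with the fact that $\pi_A$ is a quotient map (which gives $\sep{\hat\theta}=\sep{\hat\theta\circ\pi_A}$ by lifting null sequences in $\hat A$ to null sequences in $A$), this yields $\sep{\hat\theta}=\overline{\pi_B(\sep\theta)}$. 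Theorem~\ref{thm:aut-cont-lie} then gives $\pi_B(\sep\theta)\subseteq\pi_B(\mathcal{Z}(B))=Z(\hat B)$, hence $\sep{\hat\theta}\subseteq Z(\hat B)$.

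Finally, continuity of $\hat\theta$ reduces via the closed graph theorem to showing $\sep{\hat\theta}=\{0\}$. Observe that $\sep{\hat\theta}$ is in fact a closed Lie ideal of $\hat B$: for $\hat b\in\hat B$ pick $\hat a\in\hat A$ with $\hat\theta(\hat a)=\hat b$, and note that if $\hat x_n\to 0$ with $\hat\theta(\hat x_n)\to\hat y$ then $[\hat a,\hat x_n]\to 0$ while $\hat\theta([\hat a,\hat x_n])=[\hat b,\hat\theta(\hat x_n)]\to[\hat b,\hat y]$. Thus $\sep{\hat\theta}$ is a closed central Lie ideal of the semisimple algebra $\hat B$, and the sharper characterisation of $\mathcal{Z}$ as the largest \emph{quasinilpotent} Lie ideal, rather than as the bare centre modulo radical, is what should collapse this residual piece to zero. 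I expect this final collapse---passing from the central bound to triviality---to be the main obstacle: the mere inclusion in $Z(\hat B)$ is not enough, and one must bring in the full quasinilpotence formulation together with the Lie-ideal structure of $\sep{\hat\theta}$ to complete the argument.
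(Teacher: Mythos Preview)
Your overall strategy---push the separating space through the quotient maps and invoke Theorem~\ref{thm:aut-cont-lie}---is exactly the route the paper indicates, and your computation $\sep{\hat\theta}=\overline{\pi_B(\sep\theta)}\subseteq Z(\hat B)$ is correct. The problem is the final ``collapse'' you flag: it is a real gap, and the tools you propose do not close it. First, the observation that $\sep{\hat\theta}$ is a Lie ideal of $\hat B$ buys nothing once you already know it lies in the centre, since every subspace of $Z(\hat B)$ is trivially a Lie ideal. Second, the characterisation of $\mathcal{Z}$ as the \emph{largest} quasinilpotent Lie ideal works against you here rather than for you: in the semisimple algebra $\hat B$ one has $\mathcal{Z}(\hat B)=Z(\hat B)$, so the maximality statement merely re-confirms $\sep{\hat\theta}\subseteq Z(\hat B)$ and gives no mechanism to force $\sep{\hat\theta}=\{0\}$. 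You would need to know that the elements of $\sep{\hat\theta}$ are themselves quasinilpotent (central quasinilpotents in a semisimple unital Banach algebra are zero), but Theorem~\ref{thm:aut-cont-lie} does not give that: $\mathcal{Z}(B)$ contains plenty of non-quasinilpotent elements, e.g.\ the identity.

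The paper does not spell out a proof but points to the key structural fact you underuse: because $\mathcal{Z}$ is the largest quasinilpotent Lie ideal, it is \emph{invariant} under Lie epimorphisms, so one may pass to the quotient by $\mathcal{Z}$ rather than by $\rad{}$. On that quotient the separating space vanishes outright by Theorem~\ref{thm:aut-cont-lie}, and the obstacle you hit simply does not arise. In other words, the invariance of $\mathcal{Z}$ is not just an ingredient in checking well-definedness---it is the device that lets one factor through the \emph{right} quotient. (Your first-step argument for $\theta(\rad A)\subseteq\rad B$ is also shaky: $\theta$ is only a Lie map, so $\theta(\rad A)$ need not be an associative ideal, and the ``largest associative ideal of quasinilpotents'' maximality of the radical cannot be applied directly. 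For the full details one has to consult~\cite{AupMM}.)
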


The centre modulo the radical also plays an important role in the automatic continuity of derivations on Banach algebras.
It is an open question whether the separating space of each of the iterates $\delta^n$, $n\in\NN$ of a derivation 
$\delta\colon A\to A$ on a Banach algebra $A$
is contained in the radical of~$A$. If this is the case, the following long-standing problem would have a positive answer.

\begin{problem}[Noncommutative Singer--Wermer Conjecture]\label{prob:ncsw}
Does $[x,\delta x]\in\mathcal{Z}(A)$ for all $x\in A$ imply that $\delta A\subseteq\rad A$?
\end{problem}
Here, $[x,y]$ of course stands for the commutator $xy-yx$. If $\mathcal{Z}(A)$ is replaced by the proper centre $Z(A)$ of $A$,
then Problem~\ref{prob:ncsw} has an affirmative answer, due to a reduction technique developed in~\cite{MMRun} and Marc Thomas' famous
theorem~\cite{Thom}. The connection to \spbddop/s is provided by the next result~\cite{BresMM}.

\begin{thm}[Bre\v sar--Mathieu]
A derivation on a Banach algebra\/ $A$ is \spbdd/ if and only if it maps into\/ $\rad A$.
\end{thm}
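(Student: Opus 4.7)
The ``if'' direction is immediate: if $\delta A \subseteq \rad{A}$ then every $\delta x$ is quasinilpotent, so $r(\delta x) = 0 \le 0 \cdot r(x)$ and $\delta$ is spectrally bounded with constant $M=0$ (in fact spectrally infinitesimal).

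For the converse, my strategy has three steps. First, reduce to the semisimple case: derivations leave the Jacobson radical invariant, so $\delta$ induces a derivation $\hat\delta$ on $\hat A := A/\rad{A}$, which is still spectrally bounded because the spectral radius passes to closed-ideal quotients; it is enough to show $\hat\delta = 0$. Second, Johnson--Sinclair yields the automatic continuity of $\hat\delta$ on the semisimple algebra $\hat A$, so one may form the one-parameter group $\alpha_t := \exp(t\hat\delta)$ of bounded algebra automorphisms, and since automorphisms preserve spectra, $r(\alpha_t(x)) = r(x)$ for every $x \in \hat A$ and $t \in \CC$. Third, iterate the hypothesis to $r(\hat\delta^n y) \le M^n r(y)$ and combine it with this spectrum-invariance to force $\hat\delta x \in \rad{\hat A} = \{0\}$ for every $x$.

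The third step is where I expect the real work. For fixed $x$, the entire function $\lambda \mapsto \alpha_\lambda(x) = \sum_n (\lambda^n/n!)\,\hat\delta^n x$ has constant spectral radius $r(x)$, while each Taylor coefficient is controlled in spectral radius by $M^n r(x)/n!$; Vesentini's subharmonicity of $\lambda \mapsto r(\alpha_\lambda(x) - y)$ together with the sub-factorial growth of the coefficients should then force the leading perturbation $\hat\delta x$ to be quasinilpotent. To upgrade ``$\hat\delta x$ quasinilpotent'' to ``$\hat\delta x \in \rad{\hat A}$'' one can repeat the argument with $yx$ in place of $x$ and invoke Zem\'anek's characterisation $\rad{B} = \{b : r(bc) = 0 \text{ for all } c\}$, or, alternatively, first apply Sinclair's theorem on the invariance of primitive ideals to reduce to the primitive setting where the faithful irreducible representation gives an immediate extraction. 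The main obstacle is making this final analytic step quantitative: it is a localised instance of the non-commutative Singer--Wermer phenomenon highlighted in Section~\ref{sec:auto-cont}, and I anticipate it will require Aupetit's scarcity theorem alongside the subharmonic-function machinery, with the key technical identity being the generalised Leibniz formula $\hat\delta^n(xy) = \sum_k \binom{n}{k}\hat\delta^k x \cdot \hat\delta^{n-k} y$ used to propagate the spectral-radius bounds from $\hat\delta x$ to arbitrary polynomial expressions involving $x$.
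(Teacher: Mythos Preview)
The paper does not prove this theorem; it is quoted from \cite{BresMM} without argument, so there is no in-paper proof to compare your attempt against.

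On its own merits, your reductions in Steps~1 and~2 are standard and correct, but Step~3 --- which you yourself flag as ``where I expect the real work'' --- is a hope rather than an argument, and the ideas you sketch do not close the gap. The identity $r(\alpha_\lambda(x))=r(x)$ holds for \emph{every} continuous derivation, since $\alpha_\lambda=\exp(\lambda\hat\delta)$ is always an automorphism; it therefore encodes nothing specific to spectral boundedness. You hope the coefficient estimates $r(\hat\delta^{\,n} x)\le M^n r(x)$ will combine with Vesentini's subharmonicity of $\lambda\mapsto r(\alpha_\lambda(x)-y)$ to force $r(\hat\delta x)=0$, but the spectral radius is neither subadditive nor submultiplicative, so these estimates give no control over $r(\alpha_\lambda(x)-x)$ or any related expression; and a subharmonic function of exponential growth (which is all continuity of $\hat\delta$ provides) admits no Liouville-type conclusion. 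Naming ``Aupetit's scarcity theorem'' without specifying an analytic multifunction with the requisite finiteness property is not a proof step.

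The proposed upgrade from ``$r(\hat\delta x)=0$ for all $x$'' to ``$\hat\delta x\in\rad{\hat A}$'' is also broken. Applying the hypothesis to $yx$ yields only that $(\hat\delta y)\,x+y\,\hat\delta x$ is quasinilpotent, from which $r(y\,\hat\delta x)=0$ does not follow, since quasinilpotents are not closed under addition. Worse, the implication ``$\delta$ spectrally infinitesimal $\Rightarrow\delta A\subseteq\rad A$'' is exactly the $M=0$ instance of the theorem you are trying to prove, so this route does not reduce the difficulty at all. Your alternative, reducing to the primitive case via Sinclair's invariance theorem, is indeed how the argument in \cite{BresMM} begins, but a genuine endgame in the primitive setting is still required, and nothing in your sketch supplies it; that is where the substance of the Bre\v{s}ar--Mathieu proof lies.
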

Hence, Problem~\ref{prob:ncsw} is equivalent to the following question.

\medskip
\noindent{\bf Problem 1$^\prime$.}
Does $[x,\delta x]\in\mathcal{Z}(A)$ for all $x\in A$ imply that $\delta$ is \spbdd/?

\medskip
It appears that no progress on this question has been made in the last decade. For a fuller discussion see~\cite{MM1994a} and~\cite{Dal}.

\section{Quotients}\label{sec:quotients}

\noindent
In the previous section, we exploited the concept of \spbddop/s to study linear mappings satisfying some
additional algebraic conditions, such as Lie homomorphisms, derivations, etc. Another main direction of research
has been on the question which algebraic properties can be derived from the assumption of spectral boundedness.
Maybe the most prominent of these problems is the following one from~\cite{Kap}. By a \textit{Jordan epimorphism\/}
we will, of course, understand a surjective linear mapping preserving the Jordan product $x\circ y=\frac12(xy+yx)$.

\begin{problem}[Kaplansky]\label{prob:kaplansky}
Let $A$ and $B$ be semisimple unital Banach algebras. Suppose
$T\colon A\to B$ is unital, surjective and invertibility-preserving.
Is $T$ necessarily a Jordan epimorphism?
\end{problem}
Note that a Jordan epimorphism necessarily has all the properties assumed in Problem~\ref{prob:kaplansky} and hence
is bounded, by Theorem~\ref{thm:aupetit-aut-cont}. Many contributions on Kaplansky's problem have been made over the
past decades but so far it eludes a final answer. From Aupetit's substantial work on the question we only quote the following
beautiful result in~\cite{Aup00}; see also the references therein.
\begin{thm}[Aupetit]\label{thm:aupetit-main}
Let\/ $A$ and\/ $B$ be von Neumann algebras. Then every unital surjective invertibility-preserving
linear mapping\/ $T\colon A\to B$ is a Jordan epimorphism.
\end{thm}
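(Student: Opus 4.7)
The plan is to (i) get automatic continuity from the general theory, (ii) reduce the Jordan-epimorphism conclusion to the single identity $T(x^2)=T(x)^2$, and (iii) establish that identity via Aupetit's scarcity theorem for analytic multifunctions combined with the abundance of projections in $A$.

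For (i), unitality and invertibility-preservation at once give $\sigma(Tx)\subseteq\sigma(x)$, since $\lambda\mathbf{1}-Tx=T(\lambda\mathbf{1}-x)$ is invertible whenever $\lambda\notin\sigma(x)$. Thus $T$ is \spbdd/ with constant $M=1$, and because every von Neumann algebra is semisimple, Theorem~\ref{thm:aupetit-aut-cont} makes $T$ automatically bounded. For (ii), a unital surjective linear map between unital algebras is a Jordan homomorphism iff $T(x^2)=T(x)^2$ for every $x$, the mixed form $T(x\circ y)=T(x)\circ T(y)$ then following by polarisation.

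For (iii), fix $x\in A$ and consider the entire analytic multifunction $\lambda\mapsto\sigma(T(\exp(\lambda x)))$ on $\mathbb{C}$. Spectral inclusion gives
\[
\sigma\bigl(T(\exp(\lambda x))\bigr)\subseteq\exp(\lambda\sigma(x)),
\]
and Aupetit's scarcity theorem forces $\#\sigma(T(\exp(\lambda x)))$ to be constant off a polar set. The von Neumann structure of $A$ now enters: every self-adjoint element is a norm-limit of step combinations $\sum\alpha_ip_i$ of orthogonal projections. For a single projection $p$, $\sigma(Tp)\subseteq\{0,1\}$, and applying the scarcity argument to $\lambda\mapsto T(p+\lambda y)$ for a variety of $y\in A$ forces $Tp-(Tp)^2$ to be quasinilpotent, hence zero because $B$ is semisimple. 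For an orthogonal pair $p,q$, the analogue applied to $\sigma(T(p+\mu q))$ delivers $T(p)T(q)+T(q)T(p)=0$. Together these two identities give $T(x)^2=T(x^2)$ for step elements, and continuity of $T$ together with norm-density extends the Jordan identity first to $A_{\mathrm{sa}}$ and then by complex linearity to all of $A$.

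The main obstacle is the last step of (iii): translating the potential-theoretic output of the scarcity theorem—constant spectral cardinality off a polar set—into the algebraic identities $(Tp)^2=Tp$ and $T(p)T(q)+T(q)T(p)=0$. Semisimplicity of $B$ is the crucial lubricant, since it suffices to locate each error term inside $\rad{B}=0$; nevertheless, a careful choice of the one-parameter families $p+\lambda y$ and $p+\mu q$ is essential in order for the scarcity theorem to pin the spectra down sharply enough to force the algebraic conclusion.
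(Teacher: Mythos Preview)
The paper does not actually prove this theorem; it is quoted from Aupetit's paper~\cite{Aup00} without argument, so there is no in-paper proof to compare against, only Aupetit's original one.

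Your outline has a genuine gap in step~(iii). From $\sigma(Tp)\subseteq\{0,1\}$ you correctly deduce that $Tp-(Tp)^2$ is quasinilpotent, but then you write ``hence zero because $B$ is semisimple''. This inference is false: semisimplicity means $\rad{B}=\{0\}$, but an individual quasinilpotent element need not lie in the radical. In $B=M_3(\CC)$ the element $b=e_{11}+e_{23}$ has $\sigma(b)=\{0,1\}$, yet $b-b^2=e_{23}\ne0$, and $M_3(\CC)$ is semisimple. The same confusion recurs in your treatment of orthogonal projections: even granting that $Tp$ and $Tq$ are idempotents and that $\sigma(T(p+\mu q))\subseteq\{0,\mu,1,1+\mu\}$ for all~$\mu$, this spectral information alone does not force $Tp\,Tq+Tq\,Tp=0$; one can easily build non-anticommuting idempotents $e,f$ in $M_3(\CC)$ with $\sigma(e+\mu f)\subseteq\{0,\mu,1,1+\mu\}$. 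In short, you have repeatedly used ``quasinilpotent in a semisimple algebra $\Rightarrow$ zero'', which confuses $\rad{B}$ with the set of quasinilpotents.

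Aupetit's actual argument in~\cite{Aup00} does use the scarcity theorem and the abundance of projections, so your instincts about the ingredients are right; but the passage from spectral constraints to the algebraic identities $(Tp)^2=Tp$ and $Tp\,Tq+Tq\,Tp=0$ is considerably more delicate. It goes through the holomorphic functional calculus (producing genuine idempotent-valued holomorphic functions from the finite-spectrum families) together with Liouville-type arguments to pin those idempotents down, not through a one-line appeal to semisimplicity. That is precisely the ``main obstacle'' you flag in your final paragraph, and your sketch does not surmount it.
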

To the best of our knowledge, Problem~\ref{prob:kaplansky} is still open even in the case when both domain and codomain
are \C*s; cf.\ also~\cite{Harr01}. The structural investigation of \spbddop/s is partly motivated by the question to what extent
the hypotheses in Kaplansky's  problem can be relaxed while retaining the same goal, to show that the mapping is a Jordan homomorphism.
The relevance of the value at~$1$ will be discussed in the next section. When considering the possibility of replacing
``invertibility-preserving'' by ``\spbdd/'' we must keep in mind that (a)~\textit{every\/} bounded linear mapping defined on a commutative
\C* is \spbdd/ (since spectral radius and norm coincide in spaces of the form $C(X)$); (b)~a~finite trace on a \C* is a spectral contraction, hence, already
on the $n\times n$ matrices there is a unital \spbddop/ onto~$\CC$ which is not a Jordan epimorphism.
Notwithstanding this there are satisfactory results in the setting of `very infinite' \C*s; see Sections~\ref{sec:value-at-one} and~\ref{sec:spec-isoms}.

Many examples are known illustrating the fact that no strong results can be expected for non-surjective \spbddop/s in general;
see, e.g.,~\cite{Harr01}.  We shall now study the situation when  a \spbddop/ is merely surjective `up to' or `modulo' an ideal.

Suppose $I\subseteq A$ and $J\subseteq B$ are proper closed ideals in the unital Banach algebras $A$ and~$B$, respectively.
We say that a linear mapping $T\colon A\to B$ is \textit{essentially \spbdd/\/} (more precisely, $I$-$J$-\textit{essentially \spbdd/})
if there is a constant $M\geq0$ such that
$r(Tx+J)\leq M\mkern.6mu r(x+I)$ for all $x\in A$.
We call $T$ \textit{surjective modulo\/} $J$ if, for every $y\in B$, there is $x\in A$ such such $y-Tx\in J$.
If $TI\subseteq J$, we can define the induced linear mapping $\hat T\colon A/I\to B/J$ by $\hat T(x+I)=Tx+J$, $x\in A$.
The following proposition relates the properties of $\hat T$ to those of~$T$.

\begin{prop}\label{prop:ess-spec-bdd}
Let\/ $A$ and\/ $B$ be unital Banach algebras. Suppose that\/ $I$ and\/ $J$ are proper closed ideals of\/ $A$ and\/ $B$, respectively,
such that\/ $B/J$ is semisimple. For a linear mapping\/ $T\colon A\to B$ the following conditions are equivalent.
\begin{enumerate}
\item[\textup{(a)}] $T$ is essentially spectrally bounded and surjective modulo~$J$;
\item[\textup{(b)}] $TI\subseteq J$ and\/ $\hat T$ is \spbdd/ and surjective.
\end{enumerate}
\end{prop}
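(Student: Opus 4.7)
The implication (b)$\Rightarrow$(a) is immediate: if $TI\subseteq J$ and $\hat T\colon A/I\to B/J$ is a surjective \spbddop/ with constant~$M$, then $r(Tx+J)=r(\hat T(x+I))\leq M\,r(x+I)$ for every $x\in A$, and surjectivity of $\hat T$ translates directly into surjectivity of~$T$ modulo~$J$. The substance lies in (a)$\Rightarrow$(b), and this boils down to showing the single inclusion $TI\subseteq J$; once that is in hand, $\hat T$ is well-defined, the desired spectral estimate on $\hat T$ is just a rewriting of essential spectral boundedness, and surjectivity modulo~$J$ becomes surjectivity of~$\hat T$.

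To prove $TI\subseteq J$ I would argue by contradiction. Assume that some $x_0\in I$ produces a non-zero element $z_0:=Tx_0+J$ in $B/J$. Essential spectral boundedness applied to $x_0\in I$ gives $r(z_0)\leq M\,r(x_0+I)=0$, so $z_0$ is a non-zero quasinilpotent. Because $T$ is surjective modulo~$J$, every $b\in B/J$ can be written as $b=Ta+J$ for some $a\in A$, and for each $\lambda\in\CC$, using that $\lambda x_0\in I$,
\[
r(\lambda z_0+b)=r\bigl(T(\lambda x_0+a)+J\bigr)\leq M\,r\bigl(\lambda x_0+a+I\bigr)=M\,r(a+I),
\]
a bound independent of~$\lambda$.

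Now the classical subharmonicity machinery closes the argument. The map $\lambda\mapsto\lambda z_0+b$ is an entire $(B/J)$-valued function, so by Vesentini's theorem $\lambda\mapsto\log r(\lambda z_0+b)$ is subharmonic on~$\CC$, and by the above it is bounded above. Liouville's theorem for subharmonic functions forces it to be constant, and comparing $\lambda=0$ with $\lambda=1$ yields $r(z_0+b)=r(b)$ for every $b\in B/J$. Zem\'anek's characterisation of the Jacobson radical then places $z_0$ in $\rad{B/J}$, which is $\{0\}$ by the semisimplicity hypothesis; this contradicts $z_0\neq0$, so $TI\subseteq J$ as required.

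The main obstacle is precisely this step. Essential spectral boundedness on its own only tells us that $T(I)+J$ is a linear subspace of the quasinilpotents of $B/J$, and such a subspace can be non-zero even in a semisimple algebra; it is the surjectivity-modulo-$J$ hypothesis that allows one to translate $\lambda z_0$ by an arbitrary element of $B/J$ and so trigger the Vesentini--Liouville--Zem\'anek circle of ideas that pervades Aupetit's work on this topic. Everything else in the proposition reduces to routine verification.
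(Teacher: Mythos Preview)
Your argument is correct and is essentially identical to the paper's: both reduce (a)$\Rightarrow$(b) to the inclusion $TI\subseteq J$, and both obtain it by fixing $x\in I$, using surjectivity modulo~$J$ to translate $Tx+J$ by an arbitrary element of $B/J$, bounding the resulting one-parameter family via essential spectral boundedness, invoking subharmonicity to get constancy, and then applying Zem\'anek's characterisation together with semisimplicity of~$B/J$. The only cosmetic differences are that you phrase it as a contradiction and work with $\log r$ rather than~$r$; the paper argues directly and uses $r$ itself as the subharmonic function.
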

\begin{proof}\quad
(b)${}\Rightarrow{}$(a)\ Under the assumption $TI\subseteq J$, $\hat T$ is a well-defined linear mapping which is \spbdd/ if and only
if $T$ is essentially \spbdd/, by definition. Moreover, $\hat T$ surjective precisely means that, for each $y\in B$,
$y+J=\hat T(x+I)=Tx+J$ for some $x\in A$; that is, $T$ is surjective modulo~$J$.

\smallskip\noindent
(a)${}\Rightarrow{}$(b)\ By the first paragraph in this proof, it suffices to show that $TI\subseteq J$.
Take $x\in I$. By hypothesis, for each $y\in B$, there is $x'\in A$ such that $y-Tx'\in J$. Let $\lambda\in\CC$. We have
\begin{equation*}
\begin{split}
r(\lambda\,(Tx+J)+y+J) &= r(\lambda\,Tx+y+J)\\
                                                 &= r(T(\lambda\,x+x')+J)\\
                                                 &\leq M\mkern.6mu r(\lambda\,x+x'+I)\\
                                                 &= M\mkern.6mu r(x'+I),
\end{split}
\end{equation*}
for some $M\geq0$. Consequently, the subharmonic function $\lambda\mapsto r(\lambda\,(Tx+J)+y+J)$
is bounded on $\CC$, hence constant. It follows that $r(Tx+J+y+J)=r(y+J)$ for all $y\in B$ from which we conclude
that $Tx+J\in\rad{B/J}$, by Zem\'anek's characterisation of the radical. As $B/J$ is semisimple, we obtain that $Tx\in J$ as desired.
\end{proof}
As an immediate consequence we have the following variant of the main result in~\cite{LMM}.
\begin{coroll}\label{cor:pure-infinite-quotient}
Let\/ $T\colon A\to B$ be a unital linear mapping from a unital purely infinite \C*\/ $A$ with real rank zero into a unital Banach algebra~$B$.
Let\/ $J\subseteq B$ be a proper closed ideal in\/ $B$ such that\/ $B/J$ is semisimple and suppose that, for some proper closed ideal\/
$I\subseteq A$ of\/ $A$, the
operator\/ $T$ is\/ $I$-$J$-essentially \spbdd/ and surjective modulo~$J$. Then\/ $T$ is a Jordan epimorphism modulo~$J$.
\end{coroll}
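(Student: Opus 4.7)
The plan is to reduce the statement to the main theorem of~\cite{LMM} by passing to the quotients $A/I$ and $B/J$, using Proposition~\ref{prop:ess-spec-bdd} to get the induced operator into the right shape.

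First, I would apply Proposition~\ref{prop:ess-spec-bdd}. Since $B/J$ is semisimple and $T$ is $I$-$J$-essentially \spbdd/ and surjective modulo~$J$, the equivalence in that proposition gives $TI\subseteq J$ and shows that the induced linear map $\hat T\colon A/I\to B/J$, $\hat T(x+I)=Tx+J$, is well defined, \spbdd/ and surjective. As $T1=1$ and the canonical quotient maps carry the units of $A$ and $B$ to the units of $A/I$ and $B/J$ respectively, $\hat T$ is also unital.

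Next I would invoke two standard permanence facts: both ``purely infinite'' and ``real rank zero'' pass from a unital \C* to any quotient by a closed two-sided ideal. Hence $A/I$ is again a unital purely infinite \C* with real rank zero, while $B/J$ is a unital semisimple Banach algebra. This places $\hat T$ exactly in the setting of the main theorem of~\cite{LMM}, which asserts that every unital surjective \spbddop/ from a unital purely infinite \C* of real rank zero into a unital semisimple Banach algebra is a Jordan epimorphism. Applying it gives that $\hat T$ is a Jordan epimorphism, and unravelling this on representatives yields $T(x\circ y)-Tx\circ Ty\in J$ for all $x,y\in A$, which is precisely the assertion that $T$ is a Jordan epimorphism modulo~$J$.

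In this argument the genuine work is packaged inside the cited theorem of~\cite{LMM}; the corollary is really just a bookkeeping combination of Proposition~\ref{prop:ess-spec-bdd} with that theorem. The only potential obstacle is the verification that the version of the \cite{LMM} theorem one wants to cite is available for a semisimple Banach algebra codomain rather than only a \C* codomain; if only the latter were on record, one would first apply Theorem~\ref{thm:aupetit-aut-cont} to obtain boundedness of $\hat T$ and then check that the Jordan-homomorphism arguments in~\cite{LMM} depend on the codomain only through its semisimplicity, so that they transfer to the present setting without change.
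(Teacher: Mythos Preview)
Your proposal is correct and matches the paper's own proof essentially line for line: apply Proposition~\ref{prop:ess-spec-bdd} to obtain $TI\subseteq J$ and a unital surjective \spbdd/ induced map $\hat T\colon A/I\to B/J$, note that $A/I$ inherits pure infiniteness and real rank zero, and then invoke Corollary~2.5 of~\cite{LMM} to conclude that $\hat T$ is a Jordan epimorphism. Your worry about the codomain is unfounded here, since Corollary~2.5 in~\cite{LMM} is already stated for semisimple Banach algebra targets.
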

\begin{proof}
By Proposition~\ref{prop:ess-spec-bdd}, $TI\subseteq J$ and the induced mapping $\hat T\colon A/I\to B/J$ is unital, \spbdd/ and surjective.
Since $A/I$ is purely infinite and has real rank zero, Corollary~2.5 in~\cite{LMM}  implies that $\hat T$ is a Jordan epimorphism; hence,
$T$ is a Jordan epimorphism modulo~$J$.
\end{proof}
In particular, if $T$ is even a spectral isometry modulo~$J$ in the above situation, then $\hat T$ provides a Jordan isomorphism
between the quotients $A/I$ and $B/J$. This was obtained in the special case $A=B=B(H)$ and $I=J=K(H)$ for an infinite dimensional
Hilbert space~$H$ in \cite{BBS08}, Theorem~3.1.

In order to make use of quotients, invariant ideals are needed. In the setting of \C*s, so-called Glimm ideals offer themselves as
good candidates, see Section~\ref{sec:spec-isoms} below. However, so far their invariance has only been established under additional
hypotheses, for instance for \spisoms/ on von Neumann algebras~\cite{MMSou}.
\begin{problem}
Does every unital \spisom/ from a unital \C* onto another one leave each Glimm ideal invariant?
\end{problem}
The setting of \C*s is favourable also because a \spisom/ induces a \spisom/ on every quotient (\cite{MMSou}, Proposition~9).
In a more general setting it is easy to see that a \spbddop/ induces a \spbdd/ quotient operator provided the domain algebra
is an \textsl{SR}-algebra; cf.~\cite{MMRud}.

\section{The Relevance of the Value at~$1$}\label{sec:value-at-one}

\noindent
A surjective Jordan homomorphism between unital algebras attains the value~$1$~at~$1$.
In this section, we shall discuss the behaviour of an arbitrary \spbddop/ at~$1$ to see how this affects the possible
difference from a Jordan homomorphism. Remembering that \textit{every\/} bounded operator from a space
$C(X)$ is \spbdd/, we have to be careful not to expect too much in a general setting.

The following observation follows directly from our earlier results.

\begin{prop}\label{prop:basic-fact}
Let\/ $T\colon A\to B$ be a \spbddop/ from a unital \C*\/ $A$ onto a unital semisimple Banach algebra~$B$.
For every pair\/ $p,\,q$ of mutually orthogonal properly infinite projections in\/ $A$ we have
\begin{equation}\label{eq:basic-fact}
(Ta)\,(Tb)+(Tb)\,(Ta)=0\qquad(a\in pAp,\ b\in qAq).
\end{equation}
\end{prop}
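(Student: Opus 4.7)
My plan is to combine the spectral decoupling arising from orthogonality of $p$ and $q$ with the square-zero elements that proper infiniteness furnishes via Cuntz-type isometries, and then to extract vanishing of the anticommutator through Vesentini--Aupetit subharmonicity and the semisimplicity of~$B$.

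First, since $pq=qp=0$ and $a\in pAp$, $b\in qAq$, we have $ab=ba=0$, so $(\alpha a+\beta b)^n=\alpha^n a^n+\beta^n b^n$ for every $n\ge1$. Because $a^n\in pAp$ and $b^n\in qAq$ sit in orthogonal hereditary \Cs*s of~$A$, the \C*-norm identity decouples the spectral radii:
\[
r(\alpha a+\beta b)=\max\bigl(|\alpha|\,r(a),\,|\beta|\,r(b)\bigr)\qquad(\alpha,\beta\in\CC).
\]
Spectral boundedness of $T$ with constant $M$ then produces $r(\alpha\,Ta+\beta\,Tb)\le M\max(|\alpha|\,r(a),|\beta|\,r(b))$ for all $\alpha,\beta\in\CC$. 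Next I would exploit proper infiniteness by choosing Cuntz isometries $s_1,s_2\in pAp$ with $s_i^*s_j=\delta_{ij}\,p$ and $s_1s_1^*+s_2s_2^*=p$, together with analogous $t_1,t_2\in qAq$. For any $c\in pAp$ and $d\in qAq$, the elements $\tilde a:=s_1cs_2^*\in pAp$ and $\tilde b:=t_1dt_2^*\in qAq$ are square-zero (since $s_2^*s_1=0=t_2^*t_1$). Combined with $\tilde a\tilde b=\tilde b\tilde a=0$, this yields $(\alpha\tilde a+\beta\tilde b)^2=0$, hence $\alpha\,T\tilde a+\beta\,T\tilde b$ is quasinilpotent for every $\alpha,\beta\in\CC$. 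Squaring, the $B$-valued quadratic
\[
\alpha^2(T\tilde a)^2+\alpha\beta\,\{T\tilde a,T\tilde b\}+\beta^2(T\tilde b)^2
\]
takes only quasinilpotent values.

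The crux is the extraction $\{T\tilde a,T\tilde b\}=0$. Here I would apply the Vesentini subharmonicity of $\lambda\mapsto r\bigl((T\tilde a)^2+\lambda\{T\tilde a,T\tilde b\}+\lambda^2(T\tilde b)^2\bigr)$, noting that $(T\tilde a)^2$ and $(T\tilde b)^2$ are themselves quasinilpotent (because $r(T\tilde a)\le M\,r(\tilde a)=0$ and likewise for $\tilde b$), and then invoke Zem\'anek's characterisation of the Jacobson radical, together with semisimplicity of $B$, to force the middle coefficient into $\rad B=\{0\}$. To reach arbitrary $a\in pAp$ and $b\in qAq$, I expand $\{Ta,Tb\}$ bilinearly using the Cuntz matrix decomposition $a=\sum_{i,j=1}^{2}s_i(s_i^*as_j)s_j^*$ and the analogue for $b$: off-diagonal ($i\ne j$) summands fall under the square-zero case, while diagonal summands live in the smaller properly infinite corners $s_is_i^*As_is_i^*$, so an iteration, combined with the continuity of $T$ from Theorem~\ref{thm:aupetit-aut-cont}, closes the argument.

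The hardest part is the extraction step: showing that a $B$-valued entire quadratic identically quasinilpotent must have its mixed coefficient in $\rad B$. This is delicate because the quasinilpotent set in a general semisimple Banach algebra is not linear, so subharmonicity, a careful growth analysis, and the semisimplicity of $B$ must work together. The subsequent iterative passage from square-zero $\tilde a,\tilde b$ to arbitrary $a,b$ via the Cuntz decomposition is then largely formal, though one must use continuity of $T$ to take limits within the nested corners.
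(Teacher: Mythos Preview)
Your plan has the right ingredients---orthogonality of the corners, square-zero elements supplied by proper infiniteness, surjectivity plus semisimplicity---but the extraction step as you describe it does not go through, and the iteration does not terminate.

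For the extraction: from $(\tilde a+\lambda\tilde b)^2=0$ you obtain only that $T\tilde a+\lambda\,T\tilde b$ is quasinilpotent for all $\lambda$, hence so is its square. The subharmonic function $\lambda\mapsto r\bigl((T\tilde a)^2+\lambda\{T\tilde a,T\tilde b\}+\lambda^2(T\tilde b)^2\bigr)$ is therefore identically zero, which is vacuous and gives no leverage on the middle coefficient. To invoke Zem\'anek's characterisation you would need $r(\{T\tilde a,T\tilde b\}+v)=r(v)$ for \emph{every} $v\in B$, and your argument provides no mechanism for inserting an arbitrary~$v$. What is missing is exactly the nontrivial fact (\cite{MMS2}, Corollary~3.2) that a surjective \spbddop/ onto a semisimple target sends square-zero elements to square-zero elements, not merely to quasinilpotents; its proof uses surjectivity in an essential way. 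Once one has $(T\tilde a)^2=(T\tilde b)^2=(T(\tilde a+\tilde b))^2=0$, the vanishing of $\{T\tilde a,T\tilde b\}$ is plain algebra, with no subharmonicity needed at all.

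For the passage to general $a\in pAp$, $b\in qAq$: your Cuntz $2\times2$ decomposition handles the off-diagonal pieces but leaves the diagonal pieces $s_is_i^*as_is_i^*$ in strictly smaller, yet still nontrivial, properly infinite corners. Iterating produces an infinite nested process that does not yield a convergent expansion in any evident sense, so continuity of~$T$ does not close the gap. The paper's route avoids this entirely: by \cite{LMM}, Proposition~2.1, every element of a properly infinite corner is already a \emph{finite} sum of square-zero elements of that corner. Combining this with square-zero preservation, the anticommutator identity then follows by bilinearity from the square-zero case (this is the content of \cite{MMS2}, Lemma~3.3), and no iteration or limiting argument is required.
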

\begin{proof}
By~\cite{MMS2}, Corollary~3.2, $T$ preserves elements of square zero. By \cite{LMM}, Proposition~2.1,
every element in the corners $pAp$ and $qAq$ can be written as a finite sum of elements of square zero
(in $pAp$ and $qAq$, respectively), since both $p$ and $q$ are properly infinite.
The claim thus follows from Lemma~3.3 in~\cite{MMS2}.
\end{proof}

\begin{coroll}\label{cor:t-of-one}
Let\/ $T\colon A\to B$ be a \spbddop/ from a unital \C*\/ $A$ with real rank zero onto a unital semisimple Banach algebra~$B$.
Suppose that every non-zero projection in\/ $A$ is properly infinite. Then\/ $T1$ is an invertible element in the centre of~$B$.
\end{coroll}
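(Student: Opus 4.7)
My proof would proceed by direct application of Proposition~\ref{prop:basic-fact}, exploiting that under our hypotheses every pair of orthogonal nonzero projections in $A$ automatically consists of two properly infinite projections. The first step is to fix a nontrivial projection $p \in A$ (so $0 \ne p \ne 1$), observe that $p$ and $1-p$ are orthogonal and both properly infinite, and apply Proposition~\ref{prop:basic-fact} with $a = p$, $b = 1-p$. This yields $(Tp)(T(1-p)) + (T(1-p))(Tp) = 0$, which rearranges to the Jordan identity $\{T1, Tp\} = 2(Tp)^2$. More generally, for any $a \in pAp$, the same proposition delivers the ``local'' identity $\{T1, Ta\} = \{Tp, Ta\}$.

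To pass from this Jordan-commutation to genuine centrality of $T1$, I would work with finer orthogonal decompositions $1 = p_1 + \cdots + p_n$ of the identity into properly infinite projections (always possible by iterated halving, since proper infiniteness is inherited by the halvings). Proposition~\ref{prop:basic-fact} then asserts that $Tp_1, \ldots, Tp_n$ pairwise anticommute in $B$. Combining these anticommutation relations across varying decompositions, together with the automatic boundedness of $T$ from Theorem~\ref{thm:aupetit-aut-cont}, should yield a factorization $T = (T1) \cdot J$ for a Jordan epimorphism $J \colon A \to B$ with $J(1) = 1$; this is essentially the structural content of \cite{MMS2} extended to the non-unital setting, which is precisely the broader aim of the section. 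Centrality of $T1$ then drops out, and real rank zero of $A$ together with continuity ensure that it suffices to verify commutation on the dense set of projection images. For invertibility, surjectivity of $T$ provides some $c \in A$ with $1_B = Tc = (T1) \cdot J(c)$, exhibiting $J(c)$ as a two-sided inverse of $T1$.

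The main obstacle is the upgrade from the Jordan identity $\{T1, Tp\} = 2(Tp)^2$ furnished by Proposition~\ref{prop:basic-fact} to the stronger commutator identity $[T1, Tp] = 0$. A direct computation in the decomposition $1 = p + (1-p)$ yields $[T1, Tp] = -2\,(Tp)(T(1-p))$, and Proposition~\ref{prop:basic-fact} supplies only the anticommutation relation $(Tp)(T(1-p)) = -(T(1-p))(Tp)$, not the vanishing of the product itself. Bridging this gap is exactly where the deeper analysis of \cite{MMS2}, leveraging the preservation of square-zero elements (\cite{MMS2}, Corollary~3.2) alongside the rich projection lattice afforded by real rank zero and proper infiniteness, becomes indispensable.
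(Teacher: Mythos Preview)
Your proposal has a circularity problem. You want to obtain a factorization $T=(T1)\cdot J$ with $J$ a Jordan epimorphism by invoking ``the structural content of \cite{MMS2} extended to the non-unital setting.'' But that extension is precisely Theorem~\ref{thm:nonunital-spbdd}, whose proof \emph{uses} Corollary~\ref{cor:t-of-one} as its input: one first shows $T1$ is central and invertible, and only then defines $J=(T1)^{-1}T$ and checks it is Jordan via \cite{MMS2} or \cite{LMM}. The factorization does not exist independently of the centrality of~$T1$; indeed, for $(T1)^{-1}T$ to preserve squares one already needs $T1$ to commute with the range of~$T$. So you are assuming what has to be proved, and your invertibility argument inherits the same circularity.

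You also misidentify the target. The obstacle you single out---upgrading $\{T1,Tp\}=2(Tp)^2$ to $[T1,Tp]=0$---is neither what the paper aims for nor what is needed. The key step you are missing is a one-line trick: multiply $(Tp)(T1)+(T1)(Tp)=2(Tp)^2$ on the left by $Tp$, then on the right by $Tp$, and subtract, to obtain $[T1,(Tp)^2]=0$ for every nontrivial projection~$p$. From there the argument is elementary: for pairwise orthogonal projections $p_1,\ldots,p_n$ identity~\eqref{eq:basic-fact} gives $\bigl(T(\sum_i p_i)\bigr)^2=\sum_i(Tp_i)^2$, so $T1$ commutes with $(Ta)^2$ for every real-linear combination $a$ of orthogonal projections; real rank zero and boundedness push this to all $a\in A_{sa}$; polarization extends it to $(Tx)^2$ for all $x\in A$; surjectivity gives $[T1,y^2]=0$ for all $y\in B$; and $2y=(1+y)^2-1-y^2$ yields centrality. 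Invertibility follows by rereading the Jordan identity, now that $T1$ is central, as $T(p^2)\,(T1)=(Tp)^2$, extending to $T(x^2)\,(T1)=(Tx)^2$ by the same density argument, and invoking surjectivity of~$T$. No appeal to \cite{MMS2} beyond the square-zero preservation already packaged into Proposition~\ref{prop:basic-fact} is required.
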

\begin{proof}
The basic idea of the argument has been used before in some special cases, see, e.g.,~\cite{CuiHou}.
Let $p\in A$ be a non-trivial projection. Applying the identity in Proposition~\ref{prop:basic-fact} to $a=p$ and $b=q=1-p$ we obtain
\[
(Tp)\,T(1-p)+T(1-p)\,(Tp)=0,
\]
that is, $(Tp)\,(T1)+(T1)\,(Tp)=2\,(Tp)^2$. Upon multiplying this identity first on the left, then on the right by $Tp$ and subtracting the
resulting two identities we obtain $(Tp)^2\,(T1)=(T1)\,(Tp)^2$ for every (non-trivial) projection~$p$.

Let $\{p_1,\ldots,p_n\}$ be an orthogonal family of projections in~$A$. By applying~(\ref{eq:basic-fact}) inductively we find that
$\bigl(T\bigl(\sum_i p_i\bigr)\bigr)^2=\sum_i\bigl(Tp_i\bigr)^2$. Since $A$ has real rank zero, it follows that
$(T1)\,(Ta)^2=(Ta)^2\,(T1)$ for all $a$ in a dense subset of $A_{sa}$ and since $T$ is bounded, thus for all $a\in A_{sa}$.
As
\[
ab+ba=(a+b)^2-a^2-b^2
\]
and
\[
(a+ib)^2=a^2-b^2+i(ab+ba)
\]
for all selfadjoint $a,b\in A$, we conclude that $T1$ commutes with $(Tx)^2$ for every $x\in A$. The surjectivity of $T$ entails
that $T1$ commutes with every square of an element in $B$ but since $2y=(1+y)^2-1-y^2$ for each $y\in B$, we finally obtain that
$T1$ belongs to the centre of~$B$.

Going back to the first paragraph of the proof we therefore have $T(p^2)\,(T1)=(Tp)\,(T1)=(Tp)^2$ for every projection $p\in A$.
Using the same argumentation as above and the fact that $A$ has real rank zero another time we conclude that
$T(x^2)\,(T1)=(Tx)^2$ for all $x\in A$. As $T$ is surjective, it follows that $T1$ must be invertible.
\end{proof}

Whenever $T$ is a \spbddop/ such that $T1$ is an invertible element in the centre of the codomain, the mapping
$\tilde T$ defined by $\tilde Tx=(T1)^{-1}\,Tx$, $x\in A$ is a unital \spbddop/; thus we can apply the results known in this situation.

\begin{thm}\label{thm:nonunital-spbdd}
Let\/ $T\colon A\to B$ be a \spbddop/ from a unital \C*\/ $A$ onto a unital semisimple Banach algebra~$B$.
Suppose that
\begin{enumerate}
\item[\textup{(i)}] $A$ is a properly infinite von Neumann algebra, or
\item[\textup{(ii)}] $A$ is a purely infinite \C* with real rank zero.
\end{enumerate}
Then\/ there is a unique Jordan epimorphism\/ $J\colon A\to B$ such that\/ $Tx=(T1)\,Jx$ for all\/ $x\in A$.
Moreover, $T1$ is a central invertible element in~$B$.
\end{thm}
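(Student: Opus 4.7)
The overall strategy is to reduce to the unital case by first establishing that $T1$ is a central invertible element of~$B$, then rescaling by $(T1)^{-1}$ to invoke the existing unital theorems of~\cite{MMS2} and~\cite{LMM}.

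In case~(ii), Corollary~\ref{cor:t-of-one} applies verbatim, since in a purely infinite \C* with real rank zero every non-zero projection is automatically properly infinite. In case~(i), this direct appeal fails because a properly infinite von Neumann algebra generally contains non-properly-infinite projections (finite-rank projections in $B(H)$, for instance). Nevertheless, the proof of Corollary~\ref{cor:t-of-one} should adapt: a properly infinite von Neumann algebra admits a strongly convergent decomposition $1=\sum_n p_n$ into mutually orthogonal projections each equivalent to~$1$, so that every finite partial sum is a properly infinite projection whose complement is also properly infinite. Running the computation of Corollary~\ref{cor:t-of-one} over the halvings supplied by these partial sums, and invoking the real-rank-zero property of von Neumann algebras together with the norm-continuity of $T$ guaranteed by Theorem~\ref{thm:aupetit-aut-cont}, one should still be able to conclude that $T1$ commutes with every $(Tx)^2$ and hence, by polarisation and surjectivity, belongs to the centre of~$B$; invertibility of $T1$ then follows from the identity $T(x^2)\,(T1)=(Tx)^2$ combined with surjectivity of~$T$, exactly as in Corollary~\ref{cor:t-of-one}.

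With $T1$ shown to be central and invertible, define $\tilde T x := (T1)^{-1}Tx$. Centrality of $T1$ and the estimate $r(ab)\leq r(a)\,r(b)$ for commuting $a,b$ yield $r(\tilde T x)\leq r((T1)^{-1})\,M\,r(x)$, so $\tilde T$ is \spbdd/; invertibility of $T1$ makes $\tilde T$ surjective; and $\tilde T 1=1$. Thus $\tilde T$ is a unital, surjective \spbddop/ from $A$ onto $B$, so the main result of~\cite{MMS2} in case~(i) and Corollary~2.5 of~\cite{LMM} in case~(ii) each guarantees that $\tilde T$ is a Jordan epimorphism. Setting $J:=\tilde T$ yields $Tx=(T1)\,Jx$ for every $x\in A$, and uniqueness of $J$ is immediate from the invertibility of~$T1$.

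The main obstacle I anticipate is the first step in case~(i): the proof of Corollary~\ref{cor:t-of-one} as written exploits in a crucial way that \emph{every} non-zero projection in~$A$ is properly infinite, so that Proposition~\ref{prop:basic-fact} may be applied to an arbitrary projection and its complement. Replacing this hypothesis by the weaker halving structure of a properly infinite von Neumann algebra requires a careful choice of decompositions, together with a verification that the ``good'' pairs of projections (properly infinite with properly infinite complement) supply enough instances of the identity $(Tp)^2\,(T1)=(T1)\,(Tp)^2$ to recover, by real-rank-zero approximation and polarisation, the conclusion that $T1$ commutes with every $(Tx)^2$.
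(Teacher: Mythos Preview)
Your treatment of case~(ii) and the reduction step via $\tilde T=(T1)^{-1}T$ are correct and match the paper exactly. The gap you flag in case~(i) is real, and your proposed fix does not close it.

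The difficulty is this: the computation in Corollary~\ref{cor:t-of-one} needs the identity $(Tp)\,T(1-p)+T(1-p)\,(Tp)=0$ for \emph{every} projection~$p$, because the real-rank-zero approximation of a general $a\in A_{sa}$ uses its spectral projections, which are arbitrary. Your decomposition $1=\sum_n p_n$ with $p_n\sim1$ supplies only countably many special projections; the norm closure of their linear span is a small commutative subalgebra, nowhere near $A_{sa}$. There is no mechanism by which ``good'' pairs (properly infinite with properly infinite complement) approximate a finite projection or an infinite-but-not-properly-infinite one, so the density argument collapses.

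The paper does not try to enlarge the supply of good projections. Instead it proves directly that $[T1,(Tp)^2]=0$ for \emph{every} projection $p\in A$, by a four-case analysis patterned on \cite{MMS2}, Proposition~3.4. When both $p$ and $1-p$ are properly infinite, identity~(\ref{eq:basic-fact}) gives it immediately. When $p$ is properly infinite but $1-p$ is not, halve $p$ as $f+(p-f)$ with $f\sim p-f\sim p$; then $f$, $1-f$, $p-f$, $1-(p-f)$ are all properly infinite, so the first case applies to each piece and $(Tp)^2=(Tf)^2+T(p-f)^2$ by~(\ref{eq:basic-fact}). When $p$ is infinite but not properly infinite, take the central projection $z$ with $zp$ properly infinite and $(1-z)p$ finite; a careful commutator computation (using that every non-zero central projection is properly infinite and repeatedly invoking~(\ref{eq:basic-fact}) with $z$ and $1-z$) reduces $[T1,(Tp)^2]$ to quantities already shown to vanish. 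Finally, when $p$ is finite, $1-p$ is infinite and one runs the previous cases on $1-p$. Once $[T1,(Tp)^2]=0$ holds for all projections, the remainder of the argument of Corollary~\ref{cor:t-of-one} goes through unchanged, and then \cite{MMS2}, Theorem~3.6 finishes case~(i). The central-decomposition step and the attendant commutator identities are the missing ingredient in your sketch.
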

\begin{proof}
Case~(ii) is immediate from Corollary~\ref{cor:t-of-one}, since every non-zero projection in a purely infinite \C* is properly infinite. Thus we can
define $J$ by $Jx=(T1)^{-1}\,Tx$, $x\in A$, which is a Jordan epimorphism by \cite{LMM}, Corollary~2.5.

The case~(i) will not only need the result in the unital case, \cite{MMS2}, Theorem~3.6, in the same manner as just explained
but also an elaboration of the projection techniques used in the main technical result to obtain the unital case, which is
\cite{MMS2}, Proposition~3.4. Note that, by the proof of Corollary~\ref{cor:t-of-one}, it suffices to show that $T1$ commutes with
$(Tp)^2$ for every projection $p\in A$; once this is verified, $T1$ will be a central invertible element. We shall use the same strategy
as in Proposition~3.4 of~\cite{MMS2}.

Suppose first that both $p$ and $1-p$ are properly infinite. Then the assertion follows immediately from identity~(\ref{eq:basic-fact}).

Next suppose that $p$ is properly infinite but $1-p$ is not. By the Halving Lemma, there is a subprojection $f$ of $p$ such that
$p\sim f\sim p-f$, where $\sim$ denotes Murray--von Neumann equivalence. Hence, all projections $f$, $1-f$, $p-f$ and $1-p+f$
are properly infinite, see the proof of \cite{MMS2}, Proposition~3.4. By our first step we thus have
\[
(T1)\,(Tf)^2=(Tf)^2\,(T1)\quad\text{and}\quad(T1)\,T(p-f)^2=T(p-f)^2\,(T1).
\]
Applying (\ref{eq:basic-fact}) to $f$ and $p-f$ we have $(Tp)^2=T(p-f)^2+(Tf)^2$, wherefore $T1$ commutes with~$(Tp)^2$.

Suppose now that $p$ is infinite but not properly infinite. Then there is a non-trivial central projection $z$ in $A$
such that $zp$ is properly infinite and $(1-z)p$ is finite.
We need the following preliminary observation. Suppose that $q$ is a properly infinite projection in $A$ but $1-q$ is not.
Choosing the subprojection $f$ of $q$ as in the last paragraph we have
\begin{equation*}
\begin{split}
(Tq)\,(T1)+(T1)\,(Tq) &= (Tf)\,(T1)+(T1)\,(Tf)+T(q-f)\,(T1)+(T1)\,T(q-f)\\
                                         &= 2\,(Tf)^2+2\,T(q-f)^2 = 2\,(Tq)^2,
\end{split}
\end{equation*}
where the last two equality signs come from identity~(\ref{eq:basic-fact}).
Multiplying this identity first on the left, then on the right by $T1$ and subtracting the two identities we obtain
\[
(T1)^2\,(Tq)-(Tq)\,(T1)^2=2\,\bigl((T1)\,(Tq)^2-(Tq)^2\,(T1)\bigr)=0,
\]
by the previous paragraph.

To simplify our calculations we will now use the usual commutator notation $[x,y]=xy-yx$.
Since $zp$ is a properly infinite projection, the preliminary observation yields $[(T1)^2, T(zp)]=0$.
By the proof of \cite{MMS2}, Proposition~3.4, the projection $q=(1-z)(1-p)$ is properly infinite as well;
therefore, $[(T1)^2,T((1-z)(1-p))]=0$, too. As a result,
\begin{equation*}
\begin{split}
[(T1)^2,T((1-z)p)] &= [(T1)^2,Tp]-[(T1)^2,T(zp)] = -\, [(T1)^2,T(1-p)]\\
                                    &=-\, [(T1)^2,T(z(1-p))]-[(T1)^2,T((1-z)(1-p))]\\
                                    &=-\, [(T1)^2,Tz]+[(T1)^2,T(zp)]=0,
\end{split}
\end{equation*}
since every non-zero central projection in $A$, in particular~$z$, is properly infinite and thus $[(T1)^2,Tz]=0$ as well.

From identity~(\ref{eq:basic-fact}) we have
\begin{equation}\label{eq:basic-fact-central}
T(za)\,T((1-z)b)+T((1-z)b)\,T(za)=0\qquad(a,b\in A),
\end{equation}
since $1-z$ is non-zero, hence properly infinite. In particular,
\[
(Tz)\,T((1-z)p)+T((1-z)p)\,(Tz)=0.
\]
It follows that
\begin{equation*}
\begin{split}
(Tq)^2 &=T((1-z)(1-p))^2 = \bigl(T(1-z)-T((1-z)p)\bigr)^2\\
              &= T(1-z)^2+T((1-z)p)^2-(T(1-z)\,T((1-z)p)+T((1-z)p)\,T(1-z))\\
              &=T(1-z)^2+T((1-z)p)^2-((T1)\,T((1-z)p)+T((1-z)p)\,(T1)).
\end{split}
\end{equation*}
Combining this with
\begin{equation*}
[(T1)^2,Tw]=T1\,[T1,Tw]+[T1,Tw]\,T1=[T1,T1\,Tw+Tw\,T1]\qquad(w\in A)
\end{equation*}
we find that
\begin{equation*}
\begin{split}
0 &= [T1, (Tq)^2]\\
   &= [T1,T(1-z)^2]+[T1,T((1-z)p)^2] - [T1,(T1)\,T((1-z)p)+T((1-z)p)\,(T1)]\\
   &= 0 + [T1,T((1-z)p)^2] - [(T1)^2,T((1-z)p)],
\end{split}
\end{equation*}
that is, $[T1,T((1-z)p)^2]=[(T1)^2,T((1-z)p]$. The commutator on the right hand side is zero, as we saw above.
Therefore,
\begin{equation*}
[T1, (Tp)^2]=[T1,T(zp)^2]+[T1,T((1-z)p)^2]=0,
\end{equation*}
where we used~(\ref{eq:basic-fact-central}) once again.

Finally suppose that $p$ is finite. Then $p'=1-p$ is infinite. Letting $z'$ be the central projection such that $z'p'$ is properly infinite and
$(1-z')p'$ is finite, we recollect the necessary information from the arguments above.
\begin{enumerate}
\item[(i)] $[T1,(Tp')^2]=0$;
\item[(ii)] $[(T1)^2, T((1-z')p']=[T1, T((1-z')p')^2]=0$;
\item[(iii)] $[(T1)^2,T(z'p')]=[T1,T(z'p')^2]=0$.
\end{enumerate}
Consequently,
\begin{equation*}
[(T1)^2,Tp']=[(T1)^2,T(z'p')]+[(T1)^2,T((1-z')p')]=0.
\end{equation*}
As
\begin{equation*}
\begin{split}
[T1,T(1-p)^2] &=[T1, (T1)^2]+[T1,(Tp)^2]-[T1,(T1)\,(Tp)+(Tp)\,(T1)]\\
                           &=[T1,(Tp)^2]-[(T1)^2,Tp]=[T1,(Tp)^2]+[(T1)^2,T(1-p)],
\end{split}
\end{equation*}
we conclude that
\[
[T1,(Tp)^2]=[T1,(Tp')^2]-[(T1)^2,Tp']=0.
\]

This completes the argument that $[T1,(Tp)^2]=0$ for every projection $p\in A$ and, as explained above, this is sufficient to prove the result.
\end{proof}
\begin{rem}
Evidently, all we need to assume on $T$ in the proof of the above theorem is that $T$ is surjective, bounded and preserves elements of square zero.
Thus, the main result in~\cite{LMM}, Theorem~2.4, extends appropriately to the non-unital setting.
\end{rem}

As pointed out above, we cannot expect a result like Theorem~\ref{thm:nonunital-spbdd} for arbitrary \C*s of real rank zero or even von Neumann
algebras. However, in a more restricted setting the situation might change.

\begin{problem}\label{prob:Tone-on-finite-factor}
Let $T$ be a \spbddop/ defined on a finite von Neumann factor (onto a semisimple unital Banach algebra~$B$).
Is $T1$ a non-zero complex multiple of the identity in~$B$?
\end{problem}

\section{Spectral Isometries}\label{sec:spec-isoms}

\noindent
Generalising the classical Banach--Stone theorem, Kadison showed in~\cite{Kad51} that every unital surjective isometry $T$ between two unital
\C*s must be a Jordan *-isomorphism. In fact, he showed a stronger result in \cite{Kad52}, Theorem~2, namely that it suffices that $T$
maps the selfadjoint part $A_{sa}$ of $A$ isometrically onto the selfadjoint part $B_{sa}$ of~$B$.
There are many interesting consequences of these results, all relating isometric properties to selfadjointness. As an example, we state and prove a variant
of a theorem of Chan (\cite{Chan}, Theorem~3).

Recall that the numerical radius $\nu(x)$ of an element $x$ in a unital \C* $A$ is defined by
$\nu(x)=\sup\{|\varphi(x)|\mid\varphi\ \text{a state of }A\}$.

\begin{thm}
Every unital surjective numerical radius-preserving operator\/ $T$ between two unital \C*s $A$ and\/ $B$ is a Jordan *-isomorphism.
\end{thm}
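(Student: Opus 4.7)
The plan is to reduce the theorem to the strong form of Kadison's result~\cite{Kad52}, Theorem~2, which asserts that a unital linear isometric bijection $A_{sa}\to B_{sa}$ extends to a Jordan *-isomorphism $A\to B$. It will therefore suffice to show that $T$ preserves selfadjointness and is an isometry between the selfadjoint parts; complex linearity will then force the extension furnished by Kadison to coincide with $T$ itself.

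The heart of the argument will be to upgrade the hypothesis from preservation of the numerical radius $\nu$ to preservation of the full numerical range $W(x)=\{\varphi(x):\varphi\text{ a state of }A\}$. Since $T$ is unital and linear, $T(x+\alpha\cdot 1)=Tx+\alpha\cdot 1$ for every $\alpha\in\CC$, so applying the hypothesis to $x+\alpha$ yields
\[
\max_{z\in W(Tx)}|z+\alpha|=\nu(Tx+\alpha)=\nu(x+\alpha)=\max_{z\in W(x)}|z+\alpha|.
\]
Setting $\alpha=Re^{i\theta}$ and invoking the asymptotic expansion $|z+Re^{i\theta}|=R+\mathrm{Re}(ze^{-i\theta})+O(R^{-1})$ as $R\to\infty$, one reads off that the support functions $\theta\mapsto\max_{z\in K}\mathrm{Re}(ze^{-i\theta})$ of the two convex compact sets $K=W(x)$ and $K=W(Tx)$ agree for every $\theta\in\RR$; since a convex compact subset of $\CC$ is determined by its support function, this forces $W(Tx)=W(x)$ for all $x\in A$.

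From here the conclusion is immediate. The characterisation $h=h^*\iff W(h)\subseteq\RR$ shows that $T$ maps $A_{sa}$ bijectively onto $B_{sa}$, and the identity $\nu(h)=\|h\|$ valid for selfadjoint $h$ makes the restriction $T|_{A_{sa}}\colon A_{sa}\to B_{sa}$ a unital linear isometry: $\|Tx\|=\nu(Tx)=\nu(x)=\|x\|$ for all $x\in A_{sa}$. Kadison's theorem then supplies a Jordan *-isomorphism $J\colon A\to B$ with $J|_{A_{sa}}=T|_{A_{sa}}$, and since $A=A_{sa}+iA_{sa}$ and both $T$ and $J$ are complex linear, $T=J$.

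The main obstacle I anticipate is the equality $W(Tx)=W(x)$: the hypothesis directly controls only the radii of these convex compact sets, and one must simultaneously exploit the freedom to translate by arbitrary complex scalars---here the unital assumption is indispensable---and the classical recovery of a planar convex body from its support function, in order to pass from numerical radii to numerical ranges. Everything downstream of that identity is straightforward.
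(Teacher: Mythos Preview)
Your proof is correct. Both your argument and the paper's reduce to Kadison by exploiting the behaviour of $\nu(x+\alpha)$ for large $|\alpha|$, but the executions differ in scope. The paper translates only in the imaginary direction: assuming $a\in A_{sa}$ and writing $Ta=b+ic$ with $c\ne0$, it picks $\gamma\in\sigma(c)\setminus\{0\}$ and observes that for large $n\in\NN$
\[
\nu(a+in)^2=1+n^2<(\gamma+n)^2\leq\nu(c+n)^2\leq\nu(Ta+in)^2,
\]
a direct contradiction showing $TA_{sa}\subseteq B_{sa}$. You instead translate in every direction $\alpha=Re^{i\theta}$ and use the support-function characterisation of planar convex bodies to obtain the stronger conclusion $W(Tx)=W(x)$ for all $x$; selfadjointness preservation then drops out from $h=h^*\iff W(h)\subseteq\RR$. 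Your route yields more information (full numerical-range preservation) at the price of invoking a fact from convex geometry; the paper's route is a shorter, self-contained contradiction argument that extracts only what is needed.
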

\begin{proof}
Since $\nu$ is a norm, $T$ is injective and it is clear that thus $T^{-1}$ is numerical radius-preserving as well. As $\nu(x)=\|x\|$ for all $x\in A_{sa}$,
by Kadison's theorem it suffices to show that $TA_{sa}\subseteq B_{sa}$. Let $a\in A_{sa}$, $\|a\|=1$. Write $Ta=b+ic$ with $b,c\in B_{sa}$.
Suppose $c\ne0$; then there is $0\ne\gamma\in\sigma(c)$ and we may assume that $\gamma>0$. For each state $\pfi$ on $B$ and each $n\in\NN$,
we have
\begin{equation*}
|\pfi(c+n)|^2 \leq |\pfi(b)|^2+|\pfi(c+n)|^2 = |\pfi(b)+i\pfi(c+n)|^2 = |\pfi(Ta+in)|^2.
\end{equation*}
Hence, for large~$n$,
\begin{equation*}
\nu(a+in)^2=1+n^2<(\gamma+n)^2\leq\nu(c+n)^2\leq\nu(Ta+in)^2
\end{equation*}
which is impossible as $T$ is numerical radius-preserving. Therefore, $c=0$ and $Ta$ is selfadjoint.
\end{proof}

Kadison's theorem in one direction and an application of the Russo--Dye theorem in the other direction show
that a unital surjective spectral isometry is selfadjoint (i.e., maps selfadjoint elements to selfadjoint elements) if and only if
it is an isometry, see \cite{MM05}, Proposition~2.4. These results and others, and
the analogy to Kaplansky's question (Problem~\ref{prob:kaplansky} above),
made us ask the following question which we indeed state as a conjecture in~\cite{MMS1}.

\begin{problem}\label{prob:spiom-conjecture}
Is every unital surjective \spisom/ between unital \C*s a Jordan isomorphism?
\end{problem}

By now, there is a fair number of results affirming this conjecture in reasonable, though not full generality. Combining the methods of
\cite{MM04b}, \cite{MMRud} and~\cite{MMSou} we can cover another new case below.

\begin{thm}\label{thm:no-tracial-states}
Let\/ $T\colon A\to B$ be a unital spectral isometry from a
unital \C*\/ $A$ with real rank zero and without tracial states such that\/
$\prima$ is Hausdorff and totally disconnected onto a unital \C*~$B$. Then\/ $T$ is a Jordan isomorphism.
\end{thm}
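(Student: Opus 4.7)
The plan is to reduce to the simple case via Glimm ideals and then patch together. Since $T$ is a unital surjective spectral isometry, its restriction $T|_{Z(A)}$ is a $^*$-isomorphism onto $Z(B)$ by \cite{MM05}, Proposition~2.3. The Hausdorff hypothesis on $\prima$ identifies $\glimma$ with $\prima$, and $T|_{Z(A)}$ is then induced by a homeomorphism $\Phi\colon\glimmb\to\prima$. My first step, and the main technical obstacle, is to establish that $T$ sends each Glimm ideal $I_\xi$ of $A$ into the Glimm ideal $J_{\Phi^{-1}(\xi)}$ of $B$. Full Glimm-invariance is still open (as the problem stated just before the theorem records), but here one would exploit the abundant central projections arising from $\prima$ being totally disconnected together with the real-rank-zero condition, adapting the von~Neumann algebra scheme of \cite{MMSou} to the \SR/ framework developed in \cite{MMRud}.

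For each $\xi\in\prima$ the quotient $A/I_\xi$ is simple (since $\prima$ is \Tone/, so primitive ideals are maximal), has real rank zero (which passes to quotients by Brown--Pedersen), and has no tracial states (any trace on $A/I_\xi$ would lift to~$A$). By the dichotomy for simple real-rank-zero \C*s, each $A/I_\xi$ is therefore purely infinite. Granted the Glimm-invariance above, Proposition~9 of \cite{MMSou} ensures that each induced map $\hat T_\xi\colon A/I_\xi\to B/J_{\Phi^{-1}(\xi)}$ is a unital spectral isometry, and its codomain is semisimple (being primitive). Corollary~2.5 of \cite{LMM}, which extends \cite{MM04b} to the purely infinite real-rank-zero setting, then yields that $\hat T_\xi$ is a Jordan epimorphism; simplicity together with $\hat T_\xi$ being a spectral isometry upgrades this to a Jordan isomorphism.

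Finally, since $\prima$ is Hausdorff, $A$ and $B$ identify canonically with the algebras of continuous sections of the upper-semicontinuous \C*-bundles over $\prima$ whose fibres are the quotients $A/I_\xi$ and $B/J_{\Phi^{-1}(\xi)}$, respectively. The family $(\hat T_\xi)_{\xi\in\prima}$ together with $\Phi$ then assembles into a bundle morphism whose action on sections is Jordan-multiplicative and, by construction, coincides with $T$. The hard part is the Glimm-invariance claim; once in place, the rest is a formal local-to-global argument made available by the Hausdorff primitive spectrum. I expect the no-tracial-states assumption to enter precisely at the dichotomy step that furnishes pure infiniteness of the fibres, while the totally disconnected hypothesis on $\prima$ does double duty: it provides the central projections needed to force Glimm-invariance and it makes the bundle picture tractable for the patching step.
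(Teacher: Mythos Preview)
Your outline is the paper's own route: pass to Glimm quotients, apply the simple case fibrewise, then globalise. Two points are worth tightening. First, the Glimm-invariance you single out as the main obstacle is not open under the present hypotheses: the totally disconnected assumption on $\prima$ makes $Z(A)\cong C(\prima)$ have real rank zero, and Lemma~8 of \cite{MMRud} then gives $TI\in\glimmb$ for every $I\in\glimma$ directly---no adaptation of the \cite{MMSou} scheme is needed. Second, for the simple fibres the paper cites Theorem~3.1 of \cite{MM04b} (simple, real rank zero, no tracial states) rather than going via Zhang's dichotomy to pure infiniteness and then \cite{LMM}; your detour requires the implication ``stably finite $\Rightarrow$ tracial state'', which for general simple unital \C*s is the quasitrace problem and is not known without exactness, so it is safer to quote \cite{MM04b} as a black box. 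Finally, the patching is lighter than a bundle argument: once each $\hat T$ is Jordan, $T(x^2)-(Tx)^2$ lies in every Glimm ideal, and since $\bigcap\glimma=\{0\}$ this already forces $T$ to be Jordan. (A minor slip: $B/J$ is semisimple simply because it is a \C*, not because it is primitive---Glimm ideals of $B$ need not be primitive.)
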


The above statement includes in particular the simple case but we will reduce the more general situation to this one.
To this end, we need the notion of a Glimm ideal in a \C*.

There are (at least) five structure spaces associated with a unital \C*~$A$, which we will briefly discuss. Their relation can be depicted as follows.
\[
\irra\longrightarrow\speca\longrightarrow\prima
\,{\buildrel{{\beta}}\over\longrightarrow}\,\glimma\,{\buildrel{{\cong}}\over\longrightarrow}\,\specza
\]
The set $\irra$ of all irreducible representations of $A$ maps onto the set $\speca$ of all unitary equivalence classes of such representations.
Since equivalent representations have the same kernel, there is a canonical surjection from $\speca$ onto $\prima$, the set of all primitive
ideals of~$A$. The latter carries a natural topology, the Jacobson topology, which can be pulled back to $\speca$, and then to $\irra$, to turn
them into topological spaces in a canonical way. Since $Z(A)$ is a commutative unital \C*, $\specza$ allows several equivalent descriptions
of which we choose the one via maximal ideals. For each maximal ideal $M$ of $Z(A)$, the closed ideal
\[
AM=\Bigl\{\sum_{j=1}^n\,x_jz_j\mid x_j\in A,\,z_j\in M,\,n\in\NN\Bigr\}
\]
is called a \textit{Glimm ideal\/} of~$A$. For each $I\in\glimma$, $M=I\cap Z(A)$ gives the generating maximal ideal in $Z(A)$ back,
wherefore there is a bijection between the two sets. Transporting the natural topology of $\specza$ over to $\glimma$ thus turns the latter into
a compact Hausdorff space. From the other direction, we can apply the \textit{complete regularisation map\/}
$\beta\colon\prima\to\glimma$ defined by
\[
P,Q\in\prima\colon\quad P\sim Q \text{ \ if \ } f(P)=f(Q)\qquad\bigl(f\in C_b(\prima)\bigr)
\]
and $\beta(P)$ is the equivalence class with respect to this relation. If $I$ is the Glimm ideal given by $I=A(P\cap Z(A))$ then $\beta(P)$ can be
identified with $\bigcap\limits_{Q\supseteq I}Q$. Among the consequences of this is
$\bigcap\glimma=\bigcap\prima=\{0\}$, that is, the Glimm ideals separate the points of~$A$.

It follows from the definition of the Jacobson's topology that $\prima$ is T$_1$ if and only if every primitive ideal is maximal.
In a similar vein, $\prima$ is T$_2$, i.e., Hausdorff, if and only if every Glimm ideal is maximal (\cite{MMRud}, Lemma~9).

\medskip\noindent
\textbf{Proof of Theorem~\ref{thm:no-tracial-states}.}\quad
\textrm{By hypothesis, $\prima$ is Hausdorff and thus the mapping $\beta$ is a homeomorphism. Consequently, $C(\prima)=C(\glimma)=Z(A)$
has real rank zero; Lemma~8 in \cite{MMRud} therefore yields that $J=TI$ is a Glimm ideal in $B$ for every $I\in\glimma$.
The induced unital surjective operator $\hat T\colon A/I\to B/J$ given by $\hat T(x+I)=Tx+J$, $x\in A$ is a spectral isometry by
\cite{MMSou}, Proposition~9. Since $\prima$ is Hausdorff, every Glimm ideal is maximal so that $A/I$ is simple for each $I\in\glimma$.
Moreover, $A/I$ has real rank zero and no tracial states. Theorem~3.1 in~\cite{MM04b} therefore entails that $\hat T$ is a Jordan isomorphism,
and since this holds for every~$I$, we obtain that $T$ itself is a Jordan isomorphism.}
\hfill$\square$

\medskip
In contrast to results like Theorem~\ref{thm:no-tracial-states}, very little is known about the behaviour of \spisoms/ on \C*s carrying a trace.
In the remainder of this paper we suggest a possible new route to tackle this situation.

The following example is mentioned with less detail in~\cite{Sem98}.

\begin{exam}\label{exam:spbdd-mnc}
Let $T\colon M_n(\CC)\to M_n(\CC)$ be a linear mapping.
Then $T$ is unital, surjective and \spbdd/ if and only if there are a Jordan
automorphism $S$ of $M_n(\CC)$ and a non-zero complex number $\gamma$ such that
\begin{equation}\label{equa:canonical-form}
Tx=\gamma\,Sx+(1-\gamma)\,\tau(x)\qquad\bigl(x\in M_n(\CC)\bigr),
\end{equation}
where $\tau$ denotes the normalised centre-valued trace on~$M_n(\CC)$.

Evidently, the formula~(\ref{equa:canonical-form}) defines a unital mapping which is
\spbdd/, since the images under $S$ and $\tau$ commute. Moreover, $T$ is surjective:
let $y\in\mnc$ and $z\in\mnc$ be such that $Sz=y$. Put $x_1=\smash{\frac1\gamma}(z-\tau(z))$
and $x_2=\tau(z)$. Then $x_1\in\ker\tau$. Setting $x=x_1+x_2$ we have
\begin{equation*}
\begin{split}
Tx
&= T(x_1+x_2)=\gamma Sx_1+\gamma Sx_2+(1-\gamma)\tau(x_1+x_2)\\
&=Sz-\tau(z)+\gamma\tau(z)+(1-\gamma)\tau(z)\\
&=Sz=y.
\end{split}
\end{equation*}

Conversely, if $T$ is spectrally bounded, it leaves $\ker\tau$ invariant, as this space is spanned
by the nilpotent matrices and $T$ preserves nilpotency (\cite{MMS2}, Lemma~3.1). Assuming that $T$ is surjective,
it is in fact bijective and hence remains injective when restricted to $\ker\tau$. Since the
latter is finite dimensional, $\rest{T}{\ker\tau}$ is bijective from $\ker\tau$ to
$\ker\tau$. By \cite{BPW}, there exist a Jordan automorphism $S$ of $\mnc$ and a
non-zero complex number $\gamma$ such that $\rest{T}{\ker\tau}=\gamma\,\rest{S}{\ker\tau}$.
Hence, for each $x\in\mnc$,
\[
T(x-\tau(x))=\gamma\,S(x-\tau(x))
\]
which is nothing but identity~(\ref{equa:canonical-form}), if $T$ is unital.
\end{exam}

Specialising the above description of \spbdd/ operators on matrix algebras to
\spisoms/ we recover Aupetit's result from \cite{Aup93}, Proposition~2, which was proved using
holomorphic methods.

\begin{exam}\label{exam:spisom-mnc}
Every unital \spisom/ $T$ from $\mnc$ into itself is a Jordan automorphism.

Since $T\colon\mnc\to\mnc$ is injective, by \cite{MMS1}, Proposition~4.2, it is
surjective as well. By the above description~(\ref{equa:canonical-form}),
$\rest{T}{\ker\tau}=\gamma\,\rest{S}{\ker\tau}$ for a Jordan automorphism $S$ of~$\mnc$
and $\gamma\in\CC\setminus\{0\}$. Suppose first that $n\geq3$ and
take $y\in\ker\tau$ with $\sigma(y)=\{1,-\frac12\}$; for instance,
$y=e_{11}-\frac12(e_{22}+e_{33})$, where
$e_{ij}$, $1\leq i,j\leq n$ denote the standard matrix units.
Then
\[
\sigma(Ty)=\gamma\,\sigma(Sy)=\gamma\,\sigma(y)
          =\{\textstyle{\gamma,-\frac{\gamma}{2}}\}.
\]
On the other hand, $T$ preserves the peripheral spectrum, by Proposition~4.7 in~\cite{MMS1}.
Therefore $\gamma=1$, and the claim follows from~(\ref{equa:canonical-form}).
In the case $n=2$, note that identity~(\ref{equa:canonical-form}) entails that
$\tau(Tx)=\tau(x)$ for all $x\in\mnc$. Since $T$ always preserves one eigenvalue
(in the peripheral spectrum), it follows that $T$ preserves
the entire spectrum and hence must be a Jordan automorphism.
The case $n=1$ is trivial.
\end{exam}

Let us now state a problem motivated by these examples.

\begin{problem}\label{prob:spsiom-II1f}
Let $A$ be a \II1f/ with normalised centre-valued trace~$\tau$.
Let $T\colon A\to A$ be a unital surjective \spbddop/.
Are there a Jordan automorphism $S$ of $A$ and a non-zero complex number $\gamma$ such that
\begin{equation}\label{equa:canonical-form-factor}
Tx=\gamma\,Sx+(1-\gamma)\,\tau(x)\qquad\bigl(x\in A\bigr)\,?
\end{equation}
\end{problem}

\begin{rems}\quad
1.\ Every Jordan epimorphism $S\colon A\to A$ is either multiplicative or
anti-multiplicative, by a classical result due to Herstein, as $A$ is simple,
hence prime. Therefore its kernel is an ideal of~$A$. Since $A$ is simple, $S$ must be injective.

\smallskip\noindent
2.\ By the remark just made together with the final part of the argument in
Example~\ref{exam:spbdd-mnc} it suffices to find a Jordan epimorphism $S$ on $A$
such that $\rest{T}{\ker\tau}=\gamma\,\rest{S}{\ker\tau}$ for some non-zero
$\gamma\in\CC$.

\smallskip\noindent
3.\ The ``if''-part in Example~\ref{exam:spbdd-mnc} remains valid in general,
so identity~(\ref{equa:canonical-form-factor}) would in fact be a
\textit{characterization\/} of unital surjective \spbddop/s on \II1f/s.

\smallskip\noindent
4.\ It is easily seen that the representation (\ref{equa:canonical-form-factor})
is unique. Suppose that
\[
T=\gamma\,Sx+(1-\gamma)\,\tau(x)= \gamma'\,S'x+(1-\gamma')\,\tau(x)\qquad\bigl(x\in A\bigr)
\]
are two representations of the \spbddop/~$T$, where $\gamma,\gamma'\in\CC\setminus\{0\}$
and both $S$, $S'$ are Jordan automorphisms of~$A$. It suffices to show that
$\gamma=\gamma'$. Since $\gamma\,\rest{S}{\ker\tau}=\gamma'\,\rest{S'}{\ker\tau}$
and both $S$ and $S'$ are spectral isometries, it follows that $|\gamma|=|\gamma'|$.
Let $p\in A$ be a projection with $\tau(p)=\frac13$ and put $x=p-\tau(p)\in\ker\tau$.
Then $\sigma(x)=\{0,1\}-\frac13=\{-\frac13,\frac23\}$.
As both $S$ and $S'$ preserve the spectrum, it follows that
\[
\textstyle{\{-\frac{\gamma}{3},\frac{2\gamma}{3}\}}
=\gamma\,\sigma(x)=\gamma'\,\sigma(x)
=\textstyle{\{-\frac{\gamma'}{3},\frac{2\gamma'}{3}\}}
\]
and thus $\gamma=\gamma'$.
\end{rems}

Adapting the argument in Example~\ref{exam:spisom-mnc} to the \II1f/ situation,
we can infer the following result from a positive answer to Problem~\ref{prob:spsiom-II1f}.

\begin{coroll}[Under the assumption that Problem~\ref{prob:spsiom-II1f} has a positive answer]\label{cor:type-twoone}
Every unital surjective \spisom/\/ $T$ from a \II1f/\/ $A$ onto itself is a Jordan automorphism.
\end{coroll}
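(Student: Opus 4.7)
The plan is to apply the conjectural canonical form verbatim and then pin down $\gamma$ by probing with a carefully chosen test element, in exact analogy with the matrix-algebra argument of Example~\ref{exam:spisom-mnc}. Granting a positive answer to Problem~\ref{prob:spsiom-II1f}, the unital surjective \spisom/ $T\colon A \to A$ admits a representation
\[
Tx = \gamma\, Sx + (1-\gamma)\,\tau(x) \qquad (x \in A),
\]
for some Jordan automorphism $S$ of $A$ and some $\gamma \in \CC \setminus \{0\}$. Since the conclusion sought is that $T$ itself is a Jordan automorphism, it suffices to prove that $\gamma = 1$, for then $T = S$.

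To extract $\gamma = 1$, I would exploit the rich projection lattice of a \II1f/. Since $\tau$ is scalar-valued on a factor and takes every value in $[0,1]$ on projections, choose $p \in A$ with $\tau(p) = \tfrac{1}{3}$ and set $y = p - \tfrac{1}{3}\,1 \in \ker\tau$. Then $\sigma(y) = \{-\tfrac{1}{3}, \tfrac{2}{3}\}$; on $\ker\tau$ the representation collapses to $Ty = \gamma\, Sy$; and because $S$ preserves the spectrum,
\[
\sigma(Ty) = \gamma\,\sigma(Sy) = \gamma\,\sigma(y) = \bigl\{-\tfrac{\gamma}{3},\, \tfrac{2\gamma}{3}\bigr\}.
\]
The spectral isometry property now intervenes twice. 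First, $r(Ty) = r(y) = \tfrac{2}{3}$ forces $|\gamma| = 1$. Second, \spisoms/ preserve the peripheral spectrum (\cite{MMS1}, Proposition~4.7); the peripheral spectrum of $y$ is $\{\tfrac{2}{3}\}$ while, given $|\gamma|=1$, that of $Ty$ is $\{\tfrac{2\gamma}{3}\}$ (the other spectral value has modulus $\tfrac{1}{3}$), so $\gamma = 1$.

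The substantive obstacle lies not in this extraction step but in the hypothesis itself: the entire content of the corollary rests on settling Problem~\ref{prob:spsiom-II1f} affirmatively, and all the real work is in establishing the canonical form $Tx = \gamma\, Sx + (1-\gamma)\,\tau(x)$ in the \II1f/ setting. Once that form is in hand, the adaptation of the matrix argument goes through essentially verbatim, the only additional ingredient being the existence of a projection of trace $\tfrac{1}{3}$, which is automatic in any \II1f/.
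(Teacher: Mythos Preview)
Your argument is correct and follows the paper's own proof essentially verbatim: assume the canonical form, restrict to $\ker\tau$ to get $Ty=\gamma\,Sy$, use a projection $p$ with $\tau(p)=\tfrac13$ to produce $y=p-\tfrac13$ with $\sigma(y)=\{-\tfrac13,\tfrac23\}$, and then invoke preservation of the peripheral spectrum (\cite{MMS1}, Proposition~4.7) to force $\gamma=1$. The only cosmetic difference is that the paper first deduces $|\gamma|=1$ from the identity $r(x)=r(Tx)=|\gamma|\,r(Sx)=|\gamma|\,r(x)$ valid for all $x\in\ker\tau$, whereas you read it off from the single test element~$y$; either way the argument is the same.
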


\begin{proof}
Suppose the \spisom/ $T\colon A\to A$ can be written in the form~(\ref{equa:canonical-form-factor}).
Then $\rest{T}{\ker\tau}=\gamma\,\rest{S}{\ker\tau}$ for some non-zero $\gamma\in\CC$.
Since $r(x)=r(Tx)=|\gamma|\,r(Sx)=|\gamma|\,r(x)$ for all $x\in\ker\tau$, it follows that
$|\gamma|=1$. Let $p\in A$ be a projection with $\tau(p)=\frac13$.
Then $\sigma(p-\tau(p))=\{0,1\}-\frac13=\{-\frac13,\frac23\}$.
As in Example~\ref{exam:spisom-mnc},
$\sigma(T(p-\tau(p)))=\{-\frac{\gamma}{3},\frac{2\gamma}{3}\}$.
Since $T$ preserves the peripheral spectrum, we conclude that $\gamma=1$.
\end{proof}
We know that Problem~\ref{prob:spiom-conjecture} has a positive answer whenever the domain is a factor of type different
from~{\rm II}$_1$.

\end{document}